\documentclass[twoside]{article}

\usepackage[accepted]{aistats2026}          
\usepackage[english]{babel}
\usepackage{natbib}
\usepackage{adjustbox}
\usepackage{advdate}
\usepackage{algorithm}
\usepackage{algpseudocode}
\usepackage{amsmath}
\usepackage{amsthm}
\usepackage{amssymb}
\usepackage{physics}
\usepackage{booktabs}
\usepackage{braket}
\usepackage{cancel}
\usepackage{caption}
\usepackage{enumitem}
\usepackage{bbm}
\usepackage{mathrsfs}
\usepackage{empheq}
\usepackage{mathtools}

\usepackage{adjustbox}
\usepackage{xcolor}
\usepackage{graphicx}
\usepackage[colorlinks=true, allcolors=blue]{hyperref}
\usepackage{mdframed} 
\usepackage{soul}
\usepackage{subcaption}
\usepackage{multirow}
\usepackage{booktabs}
\usepackage{pgffor} 
\usepackage{makecell} 

\makeatletter
\newenvironment{aistatsboxedeq}[2][orange!6]{%
  \begin{mdframed}[hidealllines=true,backgroundcolor=#1,
    innerleftmargin=#2pt, innerrightmargin=#2pt,
    innertopmargin=6pt, innerbottommargin=6pt]
    \@fleqnfalse\@mathmargin\z@
    \begin{equation}
}{%
    \end{equation}
  \end{mdframed}
}
\makeatother
\usepackage{pgfplots}
\pgfplotsset{compat=1.18}
\usepgfplotslibrary{statistics}

\definecolor{wotPeach}{HTML}{E4936C}
\definecolor{sbirrPeach}{HTML}{EF975F}
\definecolor{appexPeach}{HTML}{F4A261}

\definecolor{wotBlue}{HTML}{6EA8DC}
\definecolor{sbirrBlue}{HTML}{74B1D8}
\definecolor{appexBlue}{HTML}{7AB8F1}

\def\PotLabs{Bohachevsky,Oakley--O'Hagan,Quadratic,Styblinski--Tang,Wavy plateau}

\pgfplotsset{
  myboxesBase/.style={
    width=\textwidth, height=0.30\textwidth,
    xtick={1,2,3,4,5},
    xticklabels=\PotLabs,
    x tick label style={align=center},
    enlarge x limits=0.08,
    ymajorgrids, grid style={line width=.2pt, draw=gray!30},
    tick label style={font=\small}, label style={font=\small},
    boxplot/box extend=0.18,
    legend to name=methodsLegend,
    legend columns=3, legend style={draw=none,/tikz/every even column/.style={column sep=8pt}},
  },
  myboxesMAE/.style={
    myboxesBase,
    ymin=0, ymax=1.8,
    ytick={0,0.2,0.4,0.6,0.8,1.0,1.2,1.4,1.6,1.8},
    ylabel={Normalized L1 drift error (pointwise)},
  },
  myboxesCOS/.style={
    myboxesBase,
    ymin=0.5, ymax=1.0,
    ytick={0.5,0.6,0.7,0.8,0.9,1.0},
    ylabel={Cosine similarity (pointwise)},
  },
}


\usetikzlibrary{patterns,patterns.meta,shapes.geometric} 

\usepackage{etoolbox} 
\usepackage{float}

\usepackage{fancyhdr}

\newcommand{\N}{\mathbb{N}}

\newcommand{\R} {\mathbb R}

\newcommand{\dXt}{\mathrm{d}X_t}
\newcommand{\dYt}{\mathrm{d}Y_t}
\newcommand{\dt}{\mathrm{d}t}
\newcommand{\dx}{\mathrm{d}x}
\newcommand{\dWt}{\mathrm{d}W_t}

\DeclareMathSymbol{\shortminus}{\mathbin}{AMSa}{"39}

\newcommand{\jkonetstar}{\texttt{JKOnet}$^\ast$}

\newcommand{\nappex}{\texttt{nn-APPEX}}
\newcommand{\appex}{\texttt{APPEX}}

\newcommand{\wot}{\texttt{WOT}}
\newcommand{\sbirr}{\texttt{SBIRR}}




\usepackage[utf8]{inputenc} 
\usepackage[T1]{fontenc}    
\usepackage{url}            
\usepackage{booktabs}       
\usepackage{amsfonts}       
\usepackage{nicefrac}       
\usepackage{microtype}      
\usepackage[capitalize]{cleveref}
\crefname{figure}{Figure}{Figures}
\Crefname{figure}{Figure}{Figures}
\newtheorem{theorem}{Theorem}[section] 
\newtheorem{lemma}[theorem]{Lemma}   
\newtheorem{prop}[theorem]{Proposition} 
\newtheorem{ex}[theorem]{Example}    
\newtheorem{cor}[theorem]{Corollary}  
  
\newtheorem{defn}[theorem]{Definition}  

\begin{document}

\twocolumn[
\aistatstitle{Gradient-Flow SDEs Have Unique Transient Population Dynamics}

\vspace*{-0.8cm}

\aistatsauthor{Vincent Guan \And Joseph Janssen \And Nicolas Lanzetti}

\aistatsaddress{University of British Columbia \And University of British Columbia \And ETH Zürich}

\vspace*{-0.1cm}

\aistatsauthor{Antonio Terpin \And Geoffrey Schiebinger \And Elina Robeva}

\aistatsaddress{ETH Zürich \And University of British Columbia \And University of British Columbia}
]

\runningauthor{Guan, Janssen, Lanzetti, Terpin, Schiebinger, Robeva}

\begin{abstract}
Identifying the drift and diffusion of an SDE from its population dynamics is a notoriously challenging task. Researchers in machine learning and single-cell biology have only been able to prove a partial identifiability result: for potential-driven SDEs, the gradient-flow drift can be identified from temporal marginals if the Brownian diffusivity is already known. Existing methods therefore assume that the diffusivity is known a priori, despite it being unknown in practice. We dispel the need for this assumption by providing a complete characterization of identifiability: the gradient-flow drift and Brownian diffusivity are jointly identifiable from temporal marginals if and only if the process is observed outside of equilibrium. Given this fundamental result, we propose \nappex, the first Schr\"odinger Bridge–based inference method that can simultaneously learn the drift and diffusion of a gradient-flow SDE solely from observed marginals. Extensive experiments show that \nappex's ability to adjust its diffusion estimate enables accurate inference, while previous Schr\"odinger Bridge methods obtain biased drift estimates due to their assumed, and likely incorrect, diffusion.





\end{abstract}

\section{INTRODUCTION}
Mathematical biologists conceptualize the evolving genetic profiles of cells, or \textit{developmental trajectories}, with Waddington's epigenetic landscape, which likens differentiating cells to marbles rolling along a surface \citep{waddington1935animal}. Cell development is therefore thought to be driven by the gradient of this unknown landscape. More broadly, gradient-driven dynamics  describe a host of real-world processes, such as contaminant flow within groundwater systems, electric current within an electromagnetic field, and molecular dynamics driven by interatomic potential energy. Since these processes exhibit stochasticity, they are often modeled by \emph{gradient-flow} stochastic differential equations (SDEs), such that the drift is given by $-\nabla \Psi(X_t)$, and stochasticity is introduced by Brownian motion $W_t$, with diffusivity $\sigma^2>0$
\citep{weinreb2018fundamental, lavenant2021towards,lelievre2016partial}:
\begin{equation}\label{eq:overdamped_langevin_SDE}
    \dXt{} = -\nabla \Psi(X_t) \dt{} + \sigma \dWt{}.
\end{equation}
 While an SDE's drift and diffusion can generally be inferred from trajectories \citep{nielsen2000parameter, bishwal2007parameter, browning2020identifiability,wang2024generator}, it is often only possible to observe population dynamics. For example, hydrogeochemical sensors can only detect plumes, rather than particle trajectories, and scRNA sequencing technologies destroy cells upon measurement \citep{trapnell2014dynamics}. This limits observations to temporal snapshots of the marginals $\bigl(p(\cdot, t)\bigr)_{t \in [0,T]}$, and raises the fundamental question:
\begin{mdframed}[hidealllines=true,backgroundcolor=gray!8]
\begin{center}
Q: \emph{Under what conditions are $-\nabla\Psi$ and $\sigma^2$ identifiable from marginals $\bigl(p(\cdot, t)\bigr)_{t \in [0,T]}$?}
\end{center}
\end{mdframed}

\begin{figure*}[t]
  \centering
  \includegraphics[width=\linewidth]{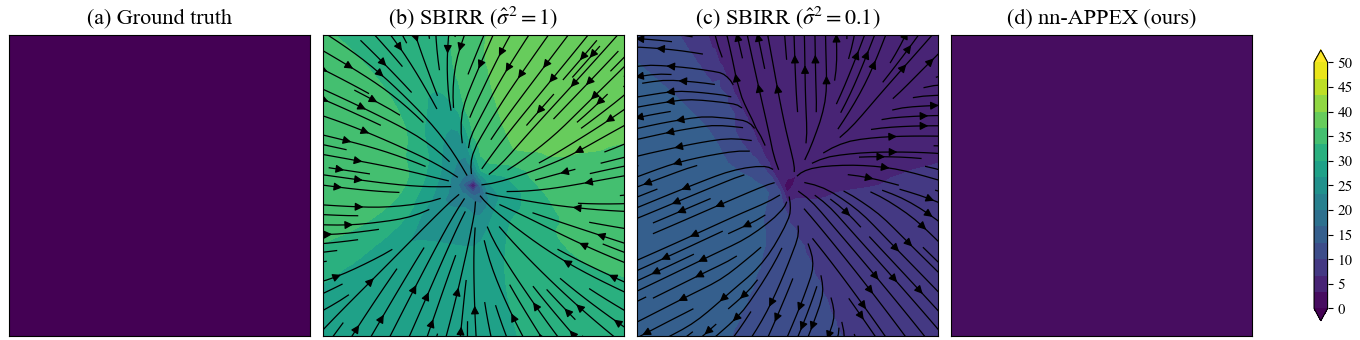}
  \caption{The true drift field (a) and estimated drift fields (b)-(d) are shown for the simple example of a Brownian motion, $\dXt{} = \sqrt{0.2} \dWt{}$. The current state-of-the-art Schr\"odinger Bridge method \sbirr\ \citep{shen2025multi, zhang2024joint} presumes prior knowledge ($\hat{\sigma}^2$) of the diffusivity $\sigma^2$ instead of inferring it from data. Figure 1(b) shows that it may wrongly infer a compressive drift force if $\hat{\sigma}^2 > \sigma^2$, while Figure 1(c) shows that it may wrongly infer an expanding drift force if $\hat{\sigma}^2 < \sigma^2$. Figure 1(d) shows that by iteratively learning the diffusion as well as the drift, our method \nappex{} can accurately infer drift without knowing diffusion a priori.}
\label{fig:misspecified_diffusion}
\end{figure*}

Despite receiving significant attention from researchers in mathematical biology and machine learning
\citep{hashimoto2016learning, weinreb2018fundamental, neklyudov2023action, lavenant2021towards, yeo2021generative, zhang2021optimal, chizat2022trajectory,  bunne2022proximal, shen2025multi,terpin2024learning}, 
the most comprehensive identifiability results only prove that the drift $-\nabla \Psi$ can be identified from $\bigl(p(\cdot, t)\bigr)_{t \in [0,T]}$ if the diffusivity $\sigma^2$ is already known:

\begin{aistatsboxedeq}[orange!6]{6} 
\begin{aligned}
&\text{observe } \bigl(p(\cdot,t)\bigr)_{t\in[0,T]} \text{ and } \sigma^2 \text{ is known}\\
&\Longrightarrow\; -\nabla \Psi \text{ is identifiable}.
\label{eq:previous_identifiability}
\end{aligned}
\end{aistatsboxedeq}
\newpage
This result, popularized in the mathematical biology community by \citet{weinreb2018fundamental} and in the machine learning community by \citet{hashimoto2016learning}, has informed the predominant view that the only way to achieve principled inference is to assume that the diffusion is already known. As a result of this theory, virtually all inference methods have been developed to estimate $-\nabla \Psi$ given a known diffusivity $\sigma^2$ \citep{hashimoto2016learning, neklyudov2023action, lavenant2021towards, yeo2021generative, zhang2021optimal, chizat2022trajectory,  bunne2022proximal, shen2025multi,zhang2024joint}.
While the drift is often the principal object of interest, 
the diffusion is also typically unknown, and contains important insights in its own right, such as the sensitivity of cell fates to initial conditions \citep{forrow2024consistent}. Perhaps most importantly, misspecified diffusion can significantly bias drift estimation. We provide a simple example in \cref{fig:misspecified_diffusion}, where the true landscape is flat, but overestimated diffusion introduces a sink and underestimated diffusion introduces a source. 

\paragraph{Contributions.} 
It has remained an open question whether it is possible and, if so, under which circumstances, \emph{both} the drift and diffusion of a gradient-flow SDE \eqref{eq:overdamped_langevin_SDE} can be identified from its marginals. Our first theoretical contribution is a complete solution to this problem. \cref{thm:identifiability} provides the \emph{necessary and sufficient conditions} for identifying the gradient flow drift and diffusivity from marginals:

\begin{mdframed}[hidealllines=true,backgroundcolor=blue!5,
                 innerleftmargin=6pt,innerrightmargin=6pt,
                 innertopmargin=6pt,innerbottommargin=6pt]
\begin{equation}
\begin{aligned}
&\text{observe non-stationary } \bigl(p(\cdot, t)\bigr)_{t \in [0,T]} \\
&\iff\; -\nabla \Psi \text{ and } \sigma^2 \text{ are identifiable}.
\end{aligned}
\end{equation}
\end{mdframed}
We also extend this result beyond the setting of continuous observation $t \in [0,T]$, by proving in \cref{cor:three_points} that observing three distinct marginals identifies the true SDE from any countable set of candidates, with probability $1$. Since our identifiability conditions are commonly observed in practice, our results provide:
\begin{enumerate}
    \item A theoretical foundation for jointly inferring drift and diffusion directly from marginals.
    \item A call to action to move beyond the conventional wisdom that diffusion should be specified a priori for principled inference from population dynamics.
\end{enumerate}
With our identifiability theory guaranteeing that joint inference is well-posed, we propose \nappex, the first Schr\"odinger Bridge (SB) based method capable of estimating both the gradient flow drift $-\nabla \Psi$ and the diffusivity $\sigma^2$. In particular, \nappex{} introduces a diffusion estimation step at each iteration while utilizing a neural network to flexibly infer the drift field. To assess the importance of \nappex's diffusion estimation step, we compare its performance against previous SB methods on simulated benchmark data \citep{terpin2024learning, persiianov2025learning} and on a single-cell dataset from human embryonic stem cells \citep{chu2016single,shen2025multi}. The results corroborate our identifiability theory, while also highlighting the importance of learning the correct diffusion for accurate drift estimation, as \nappex{} markedly outperforms previous SB methods.

\section{MATHEMATICAL SETUP}
\label{sec:math_background}

As is standard in stochastic analysis \citep{shen2025multi, berlinghieri2025oh}, we ensure that the gradient-flow SDE \eqref{eq:overdamped_langevin_SDE} is well-defined by assuming that the drift $-\nabla \Psi$ satisfies Lipschitz continuity and linear growth, $\|\nabla \Psi(x)\| \le K(1+\|x\|)$ for some $K>0$ \citep[Theorem 5.5]{oksendal2013stochastic}. 
Given these conditions, and an initial distribution $p(\cdot,0)$ with finite second moments,
the population dynamics are defined by the Fokker-Planck equation. For all $x \in \R^d, t \ge 0$,
\begin{align}
\begin{split}
    \frac{\partial p(x, t)}{\partial t}
    &=
    \mathcal{L}^*_{\shortminus\nabla\Psi, \sigma^2}(p(\cdot,t))(x)\\
    &\coloneqq 
    \nabla \cdot (p(x, t) \nabla \Psi(x)) + \frac{\sigma^2}{2} \Delta p(x, t).
    \label{eq:FP_eq_langevin}
    \end{split}
\end{align} 
While the Fokker-Planck equation \eqref{eq:FP_eq_langevin} may not be defined pointwise if the arguments are not sufficiently smooth, it can be defined in the weak distributional sense, such that $\mathcal{L}^*_{\shortminus\nabla\Psi, \sigma^2}$ is an operator on probability distributions (see \cref{subsec:weak_FP}). We therefore adopt the weak formulation and use standard notation by omitting $x$ \citep{bogachev2022fokker}, e.g., $\mathcal{L}^*_{\shortminus\nabla\Psi, \sigma^2}\bigl(p(\cdot,t)\bigr)$ rather than $\mathcal{L}^*_{\shortminus\nabla\Psi, \sigma^2}\bigl(p(\cdot,t)\bigr)(x)$.


We now rigorously define identifiability of gradient-flow SDEs, given continuous observation of their marginals. Intuitively, an SDE with parameters $(-\nabla \Psi_1, \sigma_1^2)$ is identifiable from its marginals if and only if no other SDE, with distinct parameters $(-\nabla \Psi_2, \sigma_2^2)\neq(-\nabla \Psi_1, \sigma_1^2)$, can produce the same probability flow $\frac{\partial p}{\partial t}$.
\begin{defn}[Identifiability]\label{def:non_iden}
The SDE \eqref{eq:overdamped_langevin_SDE}
with parameters $(-\nabla \Psi_1, \sigma_1^2)$ is identifiable from its marginals $\bigl(p(\cdot, t)\bigr)_{t \in [0,T]}$ if and only if, $\forall (-\nabla \Psi_2, \sigma_2^2)$,
\begin{align*}
&\mathcal{L}^*_{-\nabla \Psi_1, \sigma_1^2}\bigl(p(\cdot, t)\bigr) = \mathcal{L}^*_{-\nabla \Psi_2, \sigma_2^2}\bigl(p(\cdot, t)\bigr) \qquad \forall t \in [0,T]\\ &\implies (-\nabla \Psi_1, \sigma_1^2) = (-\nabla \Psi_2, \sigma_2^2),
\label{eq:non_iden_FP}
\end{align*}
where the first equality holds in the distributional sense and the second equality holds pointwise in $\R^d \times \R^+$.
\end{defn}
The following example \citep{lavenant2021towards, guan2024identifying} shows that gradient-flow SDEs ~\eqref{eq:overdamped_langevin_SDE} are not always identifiable from their marginals $\bigl(p(\cdot, t)\bigr)_{t \in [0,T]}$.
\begin{ex}[Non-identifiability at the stationary  distribution]
\label{ex:non_iden_stationary}
    Consider two SDEs with quadratic potential, i.e., Ornstein-Uhlenbeck processes,
\begin{align}
    \dXt{} &= -X_t \dt{} + \dWt{} \qquad\qquad\,\, X_0\sim p_0\\
    \dYt{} &= -10Y_t \dt{} + \sqrt{10}\dWt{} \qquad Y_0\sim p_0,
    \label{eq: non_iden_OU_1D}
\end{align}
with $p_0=\mathcal{N}(0, \frac{1}{2})$, i.e., Gaussian with mean $0$ and variance $\frac{1}{2}$. Since $p_0$ is the stationary distribution for both SDEs, then $X_t, Y_t \sim p(\cdot, t)=p_0$ for all $t \ge 0$, which makes the SDEs  non-identifiable from marginals.
\end{ex}





\section{IDENTIFIABILITY RESULTS}
\label{sec:iden_theory}


In this section, we present our main result in \cref{thm:identifiability}, which states that the gradient-flow SDE~\eqref{eq:overdamped_langevin_SDE} is identifiable from its marginals, $\bigl(p(\cdot, t)\bigr)_{t \in [0,T]}$, \emph{if and only if} it is observed outside of equilibrium.
We then show in \cref{cor:three_points} that three distinct marginals identify the true SDE with probability $1$ from any countable set of candidate SDEs.

We first prove that non-stationarity is a necessary condition for identifiability, by extending \cref{ex:non_iden_stationary} for arbitrary potentials.




\begin{prop}[Non-identifiability from equilibrium]
If $p_\mathrm{eq}$ is a stationary distribution for the SDE \eqref{eq:overdamped_langevin_SDE}, then it is also a stationary distribution for the ``rescaled'' SDE
\begin{equation}
    \mathrm{d}X_t = -\alpha\nabla \Psi(X_t) \mathrm{d}t + \sqrt{\alpha}\sigma \mathrm{d}W_t,
\label{eq:rescaled}
\end{equation}
for any $\alpha>0$.
\label{prop:stationary_non_iden}
\end{prop}

\begin{proof}


If $p_\mathrm{eq}$ is stationary for $\mathcal{L}^*_{-\nabla\Psi, \sigma^2}$, then
\begin{align*}
    0 &= \mathcal{L}^*_{-\nabla\Psi, \sigma^2}(p_\mathrm{eq})\\
    &= \nabla \cdot (p_\mathrm{eq} \nabla \Psi)+ \frac{\sigma^2}{2} \Delta p_\mathrm{eq}
    \\\Leftrightarrow
    0 &= \alpha\left(\nabla \cdot (p_\mathrm{eq} \nabla \Psi + \frac{\sigma^2}{2} \Delta p_\mathrm{eq})\right)\\
    &= \nabla \cdot (p_\mathrm{eq}\nabla(\alpha\Psi)) + \frac{(\sqrt{\alpha}\sigma)^2}{2} \Delta p_\mathrm{eq}
    \\&= \mathcal{L}^*_{-\nabla(\alpha\Psi), \alpha\sigma^2}(p_\mathrm{eq}).
\end{align*}
Thus, given the initialization $p(\cdot,0)=p_\mathrm{eq}$, we would observe $p(\cdot,t)=p_\mathrm{eq}$ $\forall t \ge 0$ for both processes.
\end{proof}



This shows that the SDE~\eqref{eq:overdamped_langevin_SDE} is non-identifiable from its marginals $\bigl(p(\cdot, t)\bigr)_{t \in [0,T]}$ if we observe complete stationarity, $p(\cdot, t) = p(\cdot, 0)$ $\forall t \ge 0$. We now prove that this is the only source of non-identifiability.


\begin{theorem}[Identifiability of gradient-flow SDEs]
\label{thm:identifiability} 
The SDE~\eqref{eq:overdamped_langevin_SDE} is non-identifiable from its marginals $\bigl(p(\cdot, t)\bigr)_{t \in [0,T]}$ if and only if $p(\cdot, t)=p(\cdot, 0)$ $\forall t \ge 0$.
\end{theorem}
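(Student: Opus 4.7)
The ``only if'' direction is immediate from the contrapositive of \cref{prop:stationary_non_iden}: if $p(\cdot,0)$ is stationary then every marginal equals $p(\cdot,0)$, and the rescaled dynamics~\eqref{eq:rescaled} furnish a distinct parameter pair with the same marginals. The substantive content lies in the converse, so I will assume $p(\cdot, 0)$ is \emph{not} stationary and derive a contradiction from the hypothetical existence of distinct parameter pairs $(-\nabla \Psi_1, \sigma_1^2), (-\nabla \Psi_2, \sigma_2^2)$ producing the same marginals $\{p(\cdot,t)\}_{t\in[0,T]}$.

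My first step is to subtract the two Fokker-Planck equations in~\eqref{eq:non_iden_FP}, which for every $t \in [0, T]$ yields
\begin{equation*}
    \nabla \cdot \bigl(p(\cdot, t) \nabla (\Psi_1 - \Psi_2)\bigr) + \frac{\sigma_1^2 - \sigma_2^2}{2} \Delta p(\cdot, t) = 0.
\end{equation*}
After relabeling if necessary so that $\sigma_{\mathrm{res}}^2 \coloneqq \sigma_1^2 - \sigma_2^2 \geq 0$ and setting $\Psi_{\mathrm{res}} \coloneqq \Psi_1 - \Psi_2$, this reads $\mathcal{L}_{\shortminus\nabla \Psi_{\mathrm{res}}, \sigma_{\mathrm{res}}^2}(p(\cdot, t)) = 0$. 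Crucially the residual parameters do not depend on $t$, so every observed marginal $p(\cdot, t)$ is simultaneously a stationary distribution of the \emph{same} time-homogeneous residual Langevin dynamics.

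I then split into two cases. If $\sigma_{\mathrm{res}}^2 = 0$, the diffusivities already agree, and I invoke the classical drift-identifiability result of \citet{hashimoto2016learning} (cf.\ also \citet{lavenant2021towards,neklyudov2023action}) to conclude $\nabla \Psi_1 = \nabla \Psi_2$, contradicting distinctness. If $\sigma_{\mathrm{res}}^2 > 0$, the residual is a bona fide Langevin SDE with smooth gradient drift, and I plan to argue that reversibility forces any stationary probability density to coincide with the Gibbs form $\propto \exp(-2\Psi_{\mathrm{res}} / \sigma_{\mathrm{res}}^2)$. Consequently $p(\cdot, t)$ must be independent of $t$, whence $\partial_t p \equiv 0$ and $p(\cdot, 0)$ would be stationary for the original SDE~\eqref{eq:overdamped_langevin_SDE}, contradicting the hypothesis.

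I expect the hard part to be the uniqueness of the residual SDE's stationary distribution: although $\Psi_1$ and $\Psi_2$ individually satisfy the standing smoothness and linear-growth hypotheses, their difference $\Psi_{\mathrm{res}}$ need not be confining, so the Gibbs density may fail to be normalizable and, a priori, the stationary Fokker-Planck equation might admit other probability solutions. To close this, I would work directly with the stationary equation by rewriting it as a vanishing-divergence condition on the probability current $J = p\nabla \Psi_{\mathrm{res}} + \tfrac{\sigma_{\mathrm{res}}^2}{2}\nabla p$ and exploit the gradient (reversible) structure of the drift to force $J \equiv 0$. This reduces every stationary probability density to the Gibbs form, so either the Gibbs measure is normalizable and all marginals are pinned to it, or it is not and no stationary probability density exists at all — each sub-case immediately contradicting the existence of the observed non-stationary family $\{p(\cdot,t)\}$.
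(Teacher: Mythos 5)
Your skeleton matches the paper's almost step for step: the ``stationary $\Rightarrow$ non-identifiable'' direction via \cref{prop:stationary_non_iden}, the subtraction of the two Fokker--Planck operators to obtain a residual equation $\mathcal{L}_{\shortminus\nabla\Psi_{\mathrm{res}},\sigma_{\mathrm{res}}^2}(p(\cdot,t))=0$ with time-independent residual parameters, the case $\sigma_{\mathrm{res}}^2=0$ dispatched by the known drift-identifiability result with fixed diffusivity, and the reduction of the case $\sigma_{\mathrm{res}}^2>0$ to showing that the residual stationary equation pins down $p(\cdot,t)$ uniquely, hence constant in $t$. The difference is entirely in how that last step is closed, and there your proposal has a genuine gap: the assertion that ``reversibility forces $J\equiv 0$'' for \emph{every} stationary probability density of the residual dynamics is exactly the nontrivial claim, and you do not prove it. Note that $\nabla\cdot J=0$ alone never implies $J\equiv 0$ in $d\ge 2$; the gradient structure only helps through the substitution $p=e^{-2\Psi_{\mathrm{res}}/\sigma_{\mathrm{res}}^2}h$, which gives $J=\tfrac{\sigma_{\mathrm{res}}^2}{2}e^{-2\Psi_{\mathrm{res}}/\sigma_{\mathrm{res}}^2}\nabla h$, and then concluding $\nabla h\equiv 0$ requires integrating $h\,\nabla\cdot\bigl(e^{-2\Psi_{\mathrm{res}}/\sigma_{\mathrm{res}}^2}\nabla h\bigr)=0$ by parts over $\R^d$ and killing the boundary terms at infinity. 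Since $\Psi_{\mathrm{res}}=\Psi_1-\Psi_2$ need not be confining (as you yourself note) and $h$ can grow, you have no a priori control of those boundary terms from the standing assumptions; the ``hard part'' you flag is therefore not resolved but restated.

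The paper closes precisely this step differently: it observes that the residual drift inherits the smoothness and linear-growth bounds and that $\sigma_{\mathrm{res}}^2>0$ makes the residual operator uniformly elliptic, and then invokes classical uniqueness theory for stationary Fokker--Planck(--Kolmogorov) equations \citep[Theorem 4.16, Example 4.1.8]{bogachev2022fokker}, together with \citep{kang2014characterization}, to conclude that a stationary probability solution, if it exists, is unique --- which forces all observed marginals to coincide. To repair your argument you would either cite the same classical results, or make the cutoff integration-by-parts rigorous using the linear-growth bound on $\nabla\Psi_{\mathrm{res}}$ (a Lyapunov-type estimate controlling the tails of $p$ and of $e^{-2\Psi_{\mathrm{res}}/\sigma_{\mathrm{res}}^2}|\nabla h|$); as written, the zero-current step is an unproven assertion at the crux of the theorem. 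The rest of your proposal (both case splits, the final dichotomy ``Gibbs normalizable vs.\ no stationary density'', and the conclusion that constancy of $p(\cdot,t)$ makes $p(\cdot,0)$ stationary for the original SDE) is sound and consistent with the paper.
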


\begin{proof}
\cref{prop:stationary_non_iden} proves that if $p(\cdot, t)=p(\cdot, 0)$ $\forall t \ge 0$ then the SDE is non-identifiable.

Now, we prove that non-identifiability implies that $p(\cdot, t)=p(\cdot, 0)$ $\forall t \ge 0$. Suppose that SDEs with parameters $(-\nabla \Psi_1, \sigma_1^2)$ and $(-\nabla \Psi_2, \sigma_2^2)$ produce the same marginals $\bigl(p(\cdot, t)\bigr)_{t \in [0,T]}$. Without loss of generality, we can assume that $\sigma_1^2 > \sigma_2^2$, because the previous partial result \eqref{eq:previous_identifiability} states that if $\sigma_1^2 = \sigma_2^2$, then $\nabla \Psi_1 = \nabla \Psi_2$ also follows \citep[Theorem 2.1]{lavenant2021towards}.
Then, from \cref{def:non_iden}, for all observed marginals $p(\cdot,t)$, 
\begin{align}
    \mathcal{L}^*_{-\nabla\Psi_1,\sigma_1^2}(p(\cdot,t))
    =
    \mathcal{L}^*_{-\nabla\Psi_2,\sigma_2^2}(p(\cdot,t)).
\end{align}
By the linearity of the Fokker-Planck operator, we have
\begin{align}
    0
    &=
    (\mathcal{L}^*_{\shortminus\nabla\Psi_1, \sigma_1^2}- \mathcal{L}^*_{\shortminus\nabla\Psi_2, \sigma_2^2})(p(\cdot, t))\\
    &=
    \nabla \cdot \bigl(p(\cdot, t)\nabla (\Psi_1-\Psi_2) \bigr) + \frac{\sigma_1^2-\sigma_2^2}{2}\Delta p(\cdot, t).
    \label{eq:fp-residual}
\end{align}
We note that the PDE \eqref{eq:fp-residual} is the Fokker-Planck equation, parametrized by the ``residual'' drift and diffusion. Then, to prove that $p(\cdot, t) = p(\cdot, 0)$ $\forall t \ge 0$, it suffices to show that there is at most one probability distribution, $p(\cdot, t) = \mu$, which solves \eqref{eq:fp-residual}, since each marginal would then coincide. Since the residual diffusivity $\sigma_1^2-\sigma_2^2>0$ is nondegenerate, and the residual drift $-\nabla (\Psi_1 -\Psi_2)$ is Lipschitz and obeys linear growth, it follows that the residual Fokker-Planck equation \eqref{eq:fp-residual} has at most one stationary distribution \citep[Theorem 4.1.6, Example 4.1.8]{bogachev2022fokker}.
\end{proof}

By reducing non-identifiability to the uniqueness of solutions to an elliptic PDE, our proof provides a fundamental insight about gradient-flow SDEs: \emph{observing non-identifiability between two processes is equivalent to observing a residual process at equilibrium}. 
If the SDE parameters are time-homogeneous, then each marginal $p(\cdot, t)$ coincides with the unique stationary distribution. Even if the parameters are time-inhomogeneous, non-identifiability is still characterized by equilibrium. However, equilibrium would not imply a static set of marginals, since equilibrium itself may change over time.
We provide an example in \cref{ex: non_identifiability_time_inhomo}, where the stationary distributions of the residual process coincide with Brownian marginals $\mathcal{N}(0,t)$. In \cref{sec:time_inhomo}, we also prove sufficient conditions for identifiability in the time-inhomogeneous case, when the SDE parameters change at discrete times, a setting commonly observed in cell development \citep{monnier2012bayesian}.



While \cref{prop:stationary_non_iden} shows that either the diffusivity $\sigma^2$ or the potential $\Psi$ needs to be known to disambiguate the true SDE from other SDEs with the same stationary distribution,~\cref{thm:identifiability} shows that this assumption is not required if the marginals are transient. This is important for real-world inference since the condition $p(\cdot, t)=p(\cdot, 0)$ $\forall t \ge 0$ is easily verifiable, and it is much more common to observe transient behaviour than it is to know an accurate a priori estimate of the diffusivity coefficient \citep{forrow2024consistent}. 
However, \cref{thm:identifiability} still relies on continuous observation, while we only observe a finite number of marginals in practice. We address this by showing that observing three distinct marginals is enough to identify the true SDE from any countable set of candidates, e.g. the set of SDEs that can be represented on a computer. To facilitate the proof, we assume that $\Psi \in C^\infty(\R^d)$ (see details in \cref{sec:additional_proofs}).



\begin{cor}[Identifiability from three marginals] 

Let $\mathcal{S}$ be a \emph{countable} set of gradient-flow SDEs with smooth potentials, which all share the same initial distribution $p(\cdot,0)$. If we observe distinct marginals $\{p(\cdot, t_i)\}_{i=1}^3$ 
from an SDE in $\mathcal{S}$, such that the measurement times $\{t_i\}_{i=1}^3$ are uniformly sampled, i.e., $t_i \sim \mathrm{Unif}[T_i, T_{i+1}]$, for some $0 < T_1<T_2<T_3<T_4$, then, with probability $1$, this is the only SDE in $\mathcal{S}$ with marginals $\{p(\cdot, t_i)\}_{i=1}^3$.






\label{cor:three_points}
\end{cor}
\begin{proof}
By the countability of $\mathcal{S}$, it suffices to prove that the event that any two distinct SDEs in $\mathcal{S}$ share the same marginals at each of the times $\{t_i\}_{i=1}^{3}$ has probability $0$. Denote their parameters by $(-\nabla \Psi_1, \sigma_1^2) \neq (-\nabla \Psi_2, \sigma_2^2)$ and their marginals by $\bigl( p(\cdot, t) \bigr)_{t \ge 0}$ and $\bigl( q(\cdot, t) \bigr)_{t \ge 0}$. Suppose for contradiction that if we sample $t_i \sim \mathrm{Unif}[T_i, T_{i+1}]$, then we observe $p(\cdot, t_i) = q(\cdot, t_i)$ $\forall i \in \{1,2,3\}$, with some nonzero probability.

Since we sample $t_i \sim \mathrm{Unif}[T_i, T_{i+1}]$, then the set of coincidence times in each subinterval, $\mathcal{I}_i = \{t \in [T_i, T_{i+1}] \mid p(\cdot, t) = q(\cdot, t)\}$, must be infinite to ensure nonzero probability. By Bolzano-Weierstrass, we can construct the convergent sequence $t_1^{(n)}  \to t_1^* \in [T_1,T_2]$, using times $t_1^{(n)} \in \mathcal{I}_1$, and the convergent sequence $t_2^{(n)} \to t_2^*\in [T_2,T_3]$, using times $t_2^{(n)} \in \mathcal{I}_2$. At the limit points, $t_1^*$ and $t_2^*$, we have  $p(\cdot, t_i^*) = q(\cdot, t_i^*)$ and $\frac{\partial}{\partial_t} p(\cdot, t_i^*) = \frac{\partial}{\partial_t} q(\cdot, t_i^*)$ (see \cref{lemma:flow_marginals_match} and the proof of \citet[Corollary 2]{hashimoto2016learning}). Hence, 
\begin{align}
\label{eq:first_FP}
    \mathcal{L}^*_{\shortminus \nabla \Psi_1, \sigma_1^2}(p(\cdot, t_1^*)) &= \mathcal{L}^*_{\shortminus \nabla \Psi_2, \sigma_2^2}(p(\cdot, t_1^*)) \\
    \mathcal{L}^*_{\shortminus \nabla \Psi_1, \sigma_1^2}(p(\cdot, t_2^*)) &= \mathcal{L}^*_{\shortminus \nabla \Psi_2, \sigma_2^2}(p(\cdot, t_2^*)) 
\label{eq:second_FP}
\end{align}
It then follows from \cref{thm:identifiability} that \eqref{eq:first_FP} and \eqref{eq:second_FP} are both solved by at most one distribution $\mu$. Thus, $\mu = p(\cdot, t_1^*) = p(\cdot, t_2^*)$.
However, by \cref{prop: decrease_free_energy}, the only marginal that can repeat at distinct times is the stationary Gibbs distribution $p_{\mathrm{eq}}$. This follows from a variation of Boltzmann's H-theorem, which states that the marginals of a gradient-flow SDE \eqref{eq:overdamped_langevin_SDE} decrease free energy, which is uniquely minimized at $p_{\mathrm{eq}}$ \citep{jordan1998variational,chafai2015boltzmann}. 
Both processes must therefore reach $p_{\mathrm{eq}}$ by $t_1^*$, such that  $p(\cdot, t) = q(\cdot, t) = p_{\mathrm{eq}}$ for all $t \ge t_1^*$. In particular, since $t_1^*$ is in the first subinterval, we have $p(\cdot, t_2) = p(\cdot, t_3) = p_{\mathrm{eq}}$, which contradicts the assumption that the marginals $\{p(\cdot, t_i)\}_{i=1}^3$ are distinct. 
\end{proof}

\section{NN-APPEX: THREE-STAGE SB REFINEMENT}
\label{sec:method}
We have shown in \cref{sec:iden_theory} that the full gradient-flow SDE is identifiable from $N \ge 3$ distinct marginals, $\bigl(p(\cdot, t_i)\bigr)_{i=0}^{N-1}$. Building on this, we propose Neural Network-based Alternating Projection Parameter Estimation from $X_0$ (\nappex). \appex{} searches for the true parameters $(-\nabla \Psi, \sigma^2)$ by alternating between trajectory inference, drift estimation, and diffusion estimation. \nappex{} improves upon \appex{} \citep{guan2024identifying} by replacing the linearly parameterized drift estimation step with a neural network, such that it can infer SDEs with nonlinear drift, e.g. arbitrary gradient-flow $-\nabla \Psi$.



The three-stage optimization proceeds as follows. First, given a reference SDE $Q$, we solve a multi-marginal Schr\"odinger Bridge problem to infer a law on paths $P$. In particular, $P$ minimizes $\mathrm{KL}(P \| Q)$, while obeying the multi-marginal constraints, i.e., $P  \in \Pi\bigl(p(\cdot, t_i)_{i=0}^{N-1}\bigr)$. Second, from these inferred trajectories, we perform a maximum likelihood estimation (MLE) of the drift $-\nabla \Psi$. For generality and tractability, we optimize over neural network parameters $\theta$. Third, from the inferred trajectories and the estimated drift, we estimate the diffusivity $\sigma^2$ with a closed form MLE expression. We then use the new drift and diffusion estimates to update the reference SDE $Q$ for the next iteration. Formally, at iteration $k$, \nappex{} performs the steps
\begin{align}
    \label{eq: SB_traj_inf_k}
    P_{k} &= \mathop{\arg\min}_{P \in \Pi\left(p(\cdot, t_i)_{i=0}^{N-1}\right)} \mathrm{KL}(P \| Q_{\shortminus\nabla\Psi_{\theta_{k-1}}, \sigma^2_{k-1}}),\\
    \label{eq: drift_MLE_k}
    \theta_{k} &= \mathop{\arg\min}_{\theta} \mathrm{KL}(P_{k} \| Q_{\shortminus \nabla\Psi_{\theta}, \sigma^2_{k-1}}),\\
    \label{eq: diff_MLE_k}
    \sigma^{2}_{k} &= \mathop{\arg\min}_{\sigma^2} \mathrm{KL}(P_{k} \| Q_{\shortminus \nabla\Psi_{\theta_k}, \sigma^2}),
\end{align}
and we iterate until convergence, or until a maximum number of iterations is reached.  
We emphasize that \nappex{} does not require prior knowledge, since any gradient-flow SDE \eqref{eq:overdamped_langevin_SDE} can be used as the initial reference $Q_{-\nabla \Psi_{\theta_0}, \sigma^2_0}$. While it is known that the $\mathrm{KL}$ divergence on the space $C([0,T], \R^d)$ is infinite between SDEs with different diffusions \citep{shen2025multi}, we note that given finite observation times, $\{t_i\}_{i=0}^{N-1}$, we can instead evaluate the $\mathrm{KL}$ divergence on $C(\{t_i\}_{i=0}^{N-1}, \R^d)$, computed over the observed couplings $p_{t_i, t_{i+1}}$. As shown in \citet[Section 4]{guan2024identifying} and \cref{sec: supplement_sb_refinement}, this objective is finite for gradient-flow SDEs \eqref{eq:overdamped_langevin_SDE}, and thus enables principled three-stage optimization. We present pseudocode in \cref{alg:nappex}.

\begin{figure*}[t]
  \centering

  \begin{subfigure}[b]{0.25\linewidth}
    \centering
    \includegraphics[width=\linewidth]{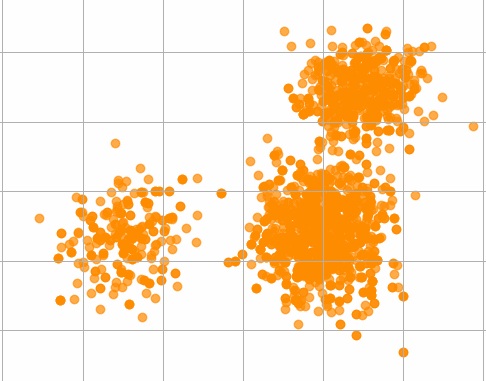}
    \caption{$t=0$}
    \label{fig:oakley:a}
  \end{subfigure}
  \hfill
  \begin{subfigure}[b]{0.25\linewidth}
    \centering
    \includegraphics[width=\linewidth]{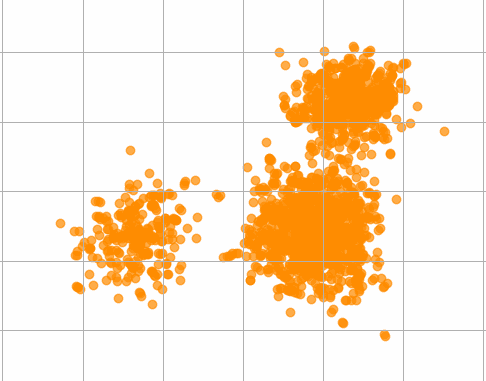}
    \caption{$t=0.01$}
    \label{fig:oakley:b}
  \end{subfigure}
  \hfill
  \begin{subfigure}[b]{0.25\linewidth}
    \centering
    \includegraphics[width=\linewidth]{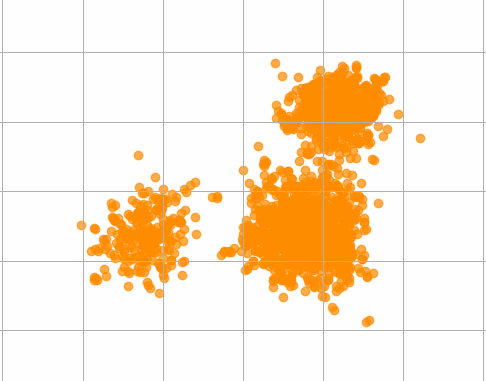}
    \caption{$t=0.02$}
    \label{fig:oakley:c}
  \end{subfigure}

  \vskip 1em
  \begin{subfigure}[b]{\linewidth}
    \centering
    \includegraphics[width=\linewidth]{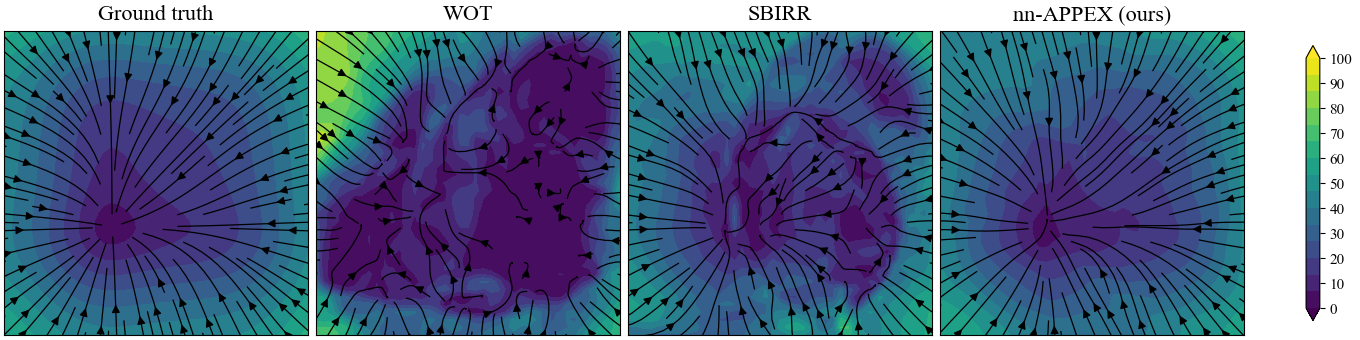}
    \caption{True and estimated landscapes}
    \label{fig:oakley:d}
  \end{subfigure}

  \caption{We simulate gradient-flow SDEs from a variety of potentials and provide inference methods with samples from three distinct marginals, initialized from a random Gaussian mixture model. Data for one seed is plotted for the Oakley–O'Hagan potential in (a)–(c), along with the true and estimated landscapes in (d).}
  \label{fig:oakley-gmms-landscapes}
\end{figure*}

\begin{figure*}[t]
    \centering

    \begin{subfigure}{\textwidth}
        \centering
        \textbf{(a) Absolute drift error (normalized by true magnitude)}\\[0.3em]
        \includegraphics[width=\textwidth]{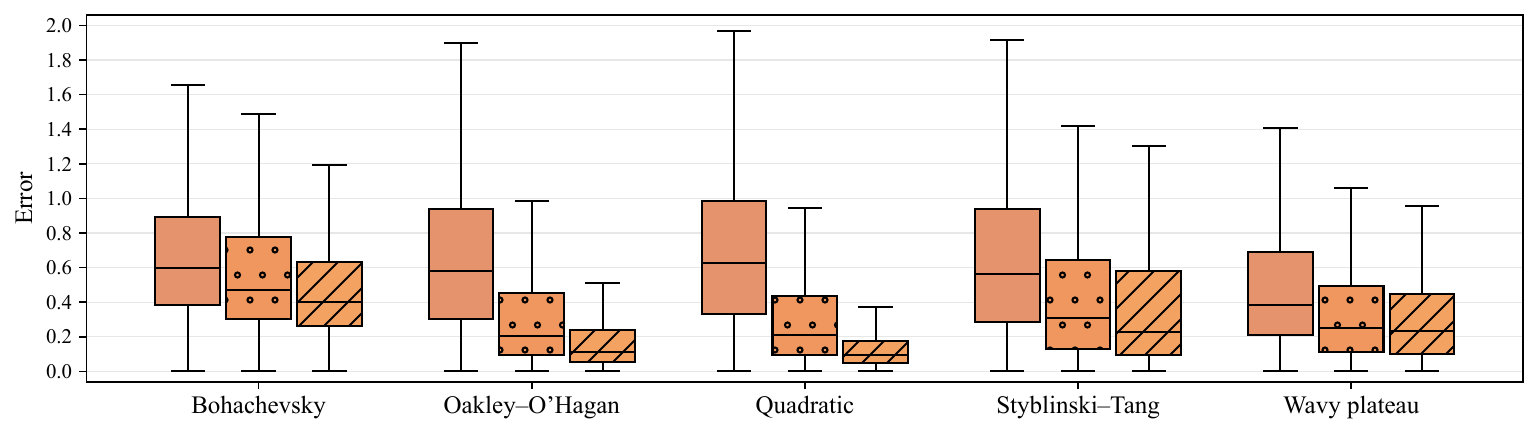}
        \label{fig:box_mae_gmm}
    \end{subfigure}

    \vspace{1em}

    \begin{subfigure}{\textwidth}
        \centering
        \textbf{(b) Cosine similarity}\\[0.3em]
        \includegraphics[width=\textwidth]{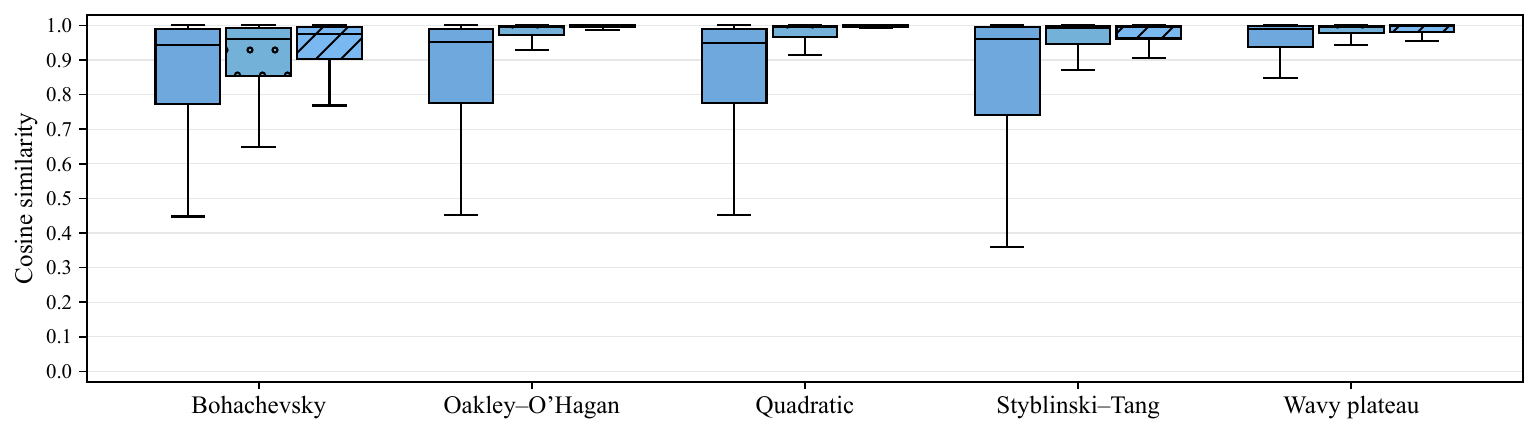}
        \label{fig:box_cos_gmm}
    \end{subfigure}

    \vspace{1em}

    \begin{tikzpicture}
      \centering
      \def\w{1}   
      \def\h{0.4}   
      \def\dx{4.0}   
      \def\labgap{0.2} 
      \def\hatchdist{4pt}
      \def\hatchwidth{0.3pt}
      \def\hatchangle{45}

      \begin{scope}[shift={(0,0)}]
        \draw[black] (0,0) rectangle (\w,\h);
        \node[anchor=west] at (\w+\labgap,\h/2) {\wot};
      \end{scope}

      \begin{scope}[shift={(\dx,0)}]
        \draw[black] (0,0) rectangle (\w,\h);
        \draw[black] (0.15,0.5*\h) circle (0.02);
        \draw[black] (0.5,0.5*\h) circle (0.02);
        \draw[black] (0.85,0.5*\h) circle (0.02);
        \node[anchor=west] at (\w+\labgap,\h/2) {\sbirr};
      \end{scope}

      \begin{scope}[shift={(2*\dx,0)}]
        \path[
          pattern = {Lines[angle=\hatchangle, distance=\hatchdist, line width=\hatchwidth]},
          pattern color = black
        ] (0,0) rectangle (\w,\h);
        \draw[black] (0,0) rectangle (\w,\h);
        \node[anchor=west] at (\w+\labgap,\h/2) {\nappex{} (ours)};
      \end{scope}
    \end{tikzpicture}
    \caption{The ability of different Schr\"odinger Bridge methods to infer the gradient-flow drift is evaluated across five different potentials using (a) normalized absolute error (lower is better) and (b) cosine similarity (higher is better). Methods are given samples from three distinct marginals, such that the initial distribution is a Gaussian mixture model with randomly initialized components. The box-and-whisker plots aggregated from $10$ seeds show that our method, \nappex{}, performs the best across all potentials.}
    \label{fig:boxplots_gmm}
\end{figure*}

The SB problem \eqref{eq: SB_traj_inf_k} is equivalent to entropic optimal transport with an adjusted cost, and can thus be solved with a variety of iterative proportional fitting algorithms, such as Sinkhorn's algorithm  \citep{peyre2019computational}. By default, we use the multi-marginal Sinkhorn algorithm \citep{marino2020optimal}. For MLE parameter estimation on the inferred paths, we use the Euler-Maruyama approximation, and present derivations in \cref{sec: MLE_appendix}. In particular, the optimal drift parameters $\hat{\theta}$ are independent of $\sigma^2$, since minimizing the negative log-likelihood with respect to $\theta$ is equivalent to minimizing
\begin{equation}
\label{eq:likelihood_fn_nn}
\textstyle \ell(\theta) = \scalebox{0.93}{$ \displaystyle \sum_{m=1}^{M}\sum_{i=0}^{N-2}\left\|x_{i+1}^{(m)}-x_{i}^{(m)}+\nabla \Psi_\theta(x_{i}^{(m)})\Delta t\right\|_2^2 $},
\end{equation}
where $M$ is the number of paths, $N$ is the number of time steps, and $\Delta t$ is the time step. We therefore estimate $-\nabla \Psi_{\hat{\theta}}$ by fitting a neural network whose parameters minimize $\ell(\theta)$, as done in \citet{shen2025multi}. The diffusion MLE, $\hat{\sigma}^2$, is then derived from the quadratic variation, conditioned on the estimated drift $\nabla \Psi_{\hat{\theta}}$,
\begin{align}
\hat{\sigma}^2
&=
\frac{1}{d\,M\,(N-1)\,\Delta t}\ell(\hat{\theta}).
\label{eq:quad_var_diff_MLE}
\end{align}

\begin{algorithm}
\caption{\nappex{}}
\begin{algorithmic}[1]
\State \textbf{Input:} Observed marginals $\hat{p}_{t_i}$, $i = 0, \ldots, N-1$, number of iterations $K$, time step $\Delta t$
\State \textbf{Initialize:} $\nabla\hat{\Psi} \gets 0$,\; $\hat{\sigma}^2 \gets 1$
\For{$k = 1, \ldots, K$}
    \State $\{\mathcal{K}_i(x,y)\}_{i=0}^{N-2} \gets \exp\!\left(-\frac{\|y - x + \nabla\hat{\Psi}(x)\,\Delta t\|^2}{ 2\hat{\sigma}^2\Delta t}\right)$
\State $\{\pi_i\}_{i=0}^{N-2}  \gets \textsc{MultiSinkhorn}\!\left(\{\hat{p}_{t_i}\},\, \{\mathcal{K}_i\}\right)$
\vspace{0.5em}
\State $\{\hat{\tau}^{(s)}\}_{s=1}^S \gets \textsc{SamplePaths}\!\left(\{\pi_i\}\right)$
\vspace{0.5em}
    \State $\nabla\hat{\Psi} \gets \textsc{NNTrain}\!\left(\{\hat{\tau}^{(s)}\}\right)$
    \State $\hat{\sigma}^2 \gets \textsc{MLEDiffusivity}\!\left(\{\hat{\tau}^{(s)}\},\, \nabla\hat{\Psi}\right)$
\EndFor
\State \Return $\nabla\hat{\Psi},\; \hat{\sigma}^2$
\end{algorithmic}
\label{alg:nappex}
\end{algorithm}

By alternating between these three procedures, \nappex{} iteratively reduces the divergence between \emph{reconstructed paths}, $P \in \Pi\left(p(\cdot, t_i)_{i=0}^{N-1}\right)$, which obey the observed marginals, and \emph{paths from the estimated gradient-flow SDE} with law $Q$:
\begin{align}
\mathrm{KL}(P_{k+1}\|Q_{\shortminus\nabla\Psi_{k+1}, \sigma^2_{k+1}}) &\le \mathrm{KL}(P_{k+1}\|\,Q_{\shortminus\nabla\Psi_{k+1}, \sigma^2_{k}}) \notag \\
\le \mathrm{KL}(P_{k+1}\|Q_{\shortminus\nabla\Psi_{k}, \sigma^2_{k}}) &\le \mathrm{KL}(P_{k}\|Q_{\shortminus\nabla\Psi_{k}, \sigma^2_{k}}). 
\label{eq: appex_decrease_KL}
\end{align}

By decreasing the relative divergence, \nappex{} iteratively approaches the unique true solution, though it is unclear whether the algorithm can stagnate at a different set of parameters. We conjecture that convergence to the true solution holds given infinite data, and leave the proof for future work. 

To highlight distinctions with existing SB methods, we note relative differences in algorithmic design:
\begin{itemize}
    \item Waddington-OT (\wot) \citep{schiebinger2019optimal} fixes the reference $Q$ to be a Brownian motion with diffusivity $\sigma^2$ and stops  after \eqref{eq: drift_MLE_k}, i.e., fix $K=1$ in \cref{alg:nappex}.
    \item \sbirr{} \citep{shen2025multi,zhang2024joint} fixes $\sigma^2$ and iteratively performs steps \eqref{eq: SB_traj_inf_k} and \eqref{eq: drift_MLE_k}, i.e., exclude line 8 in \cref{alg:nappex}.
    \item \appex{} \citep{guan2024identifying} iteratively performs all three steps, but only considers linear drift, i.e., replace line 7 in \cref{alg:nappex} with a closed form MLE for linear SDEs.
\end{itemize}
Each method reconstructs paths by solving a Schr\"odinger Bridge problem. However, these paths will be incorrect unless the reference SDE matches the true SDE. While \nappex{} has the flexibility to re-estimate the gradient-flow drift as well as the diffusivity, previous methods cannot reliably determine a suitable reference SDE. \wot{}, and related methods \citep{lavenant2021towards, forrow2021lineageot, chizat2022trajectory}, fix a pure Brownian reference, which is incompatible with any nonzero potential. Although \sbirr{} adjusts its reference drift, if $\sigma^2$ is misspecified, then it would also misspecify the reference SDE, hindering its ability to learn the drift. While \appex{} adjusts drift and diffusion, it estimates a misspecified SDE, unless the ground-truth potential $\Psi$ happens to be quadratic.





\section{SIMULATED EXPERIMENTS}
\label{sec:experiments}
In this section, we perform extensive experiments on simulated data to evaluate \nappex{} against previous Schr\"odinger Bridge methods, \wot{} and \sbirr. The code repository is available on GitHub: \url{https://github.com/guanton/identifying-gradient-flow-sdes}.

\subsection{Experimental Setup}
We explain details about the data, method implementation, and performance metrics below. 
\paragraph{Data generation.}
To simulate gradient-flow SDEs, we fix the diffusivity $\sigma^2=0.2$ and consider five potentials that are commonly used as benchmarks in the literature \citep{terpin2024learning, persiianov2025learning}. Potentials were chosen to ensure the existence of a stationary distribution $p_{\mathrm{eq}}=\frac{1}{Z}\exp(-\frac{\Psi}{2\sigma^2})$. See \cref{sec:potentials} for the list of potentials.
For our main experiments, we initialize the first marginal as a random Gaussian mixture model (GMM), due to their universal approximation properties \citep{mclachlan2014number} and their applicability for clustering scRNA data into different cell types \citep{yu2021scgmai}. We uniformly sample the number of components between $1$ and $10$, their means within $[-3,3]^2$, and their variances between $0.5$ and $1.0$. We then mirror the ``three marginals'' identifiability setting from \cref{cor:three_points}, by sampling $N=2000$ points from the GMM initial distribution, and then forward simulating these points using the Euler-Maruyama scheme for an additional $2$ time steps of size $\Delta t = 0.01$. Experiments are repeated across $10$ seeds for each SDE to ensure replicability. Marginals for one seed are pictured for the SDE driven by the Oakley-O'Hagan potential in \cref{fig:oakley-gmms-landscapes}(a)-(c).

\paragraph{Methods.} 
To perform a systematic ablation that isolates the effects of iterative reference refinement and adaptive diffusion on inference, all methods are given the same data and initialization, and any shared subprocedure is implemented identically across methods. To emulate the realistic scenario where the practitioner can only estimate the true diffusivity $\sigma^2$ up to an order of magnitude \citep{guan2024identifying}, we sample the diffusivity prior as $\sigma^2_{\text{prior}} \sim \sigma^2 \times 10^{\textrm{Unif}[-1,1]}$, for each of the $10$ seeds. This setting could be considered favourable towards methods like \wot{} and \sbirr{}, since real-world diffusion estimates may in fact be off by two orders of magnitude \citep{forrow2024consistent}. For all methods, the SB step \eqref{eq: SB_traj_inf_k} is solved using up to $200$ iterations of a multi-marginal iterative proportional fitting procedure (IPFP) \citep[Section 4.3]{marino2020optimal}, which rescales the per-time slice scalings until the marginal constraints are met up to an $L^\infty$ error of $10^{-5}$. $2000$ trajectories are then sampled from the inferred law on paths. Each method performs the drift MLE step \eqref{eq: drift_MLE_k} by fitting a $2$-layer (128 neurons per layer) multi-layer perceptron with \texttt{SiLU} activation, trained to minimize \eqref{eq:likelihood_fn_nn} via the Adam optimizer (epochs $=500$ and learning\_rate $=3 \times 10^{-3}$) \citep{kingma2014adam}. Only \sbirr{} and \nappex{} are iterative, and we use $30$ iterations for each method.


\vspace{-1em}
\paragraph{Metrics.}
We assess \wot, \sbirr, and \nappex, by evaluating both the scale and shape of their resulting landscapes $\nabla \Psi_{\hat{\theta}}$ with respect to the true landscape $\nabla \Psi$. We use the following metrics, evaluated on $2601$ points ($51 \times 51$ point grid) within $[-4,4]^2$: 
\begin{itemize}[leftmargin=*, nosep]
\item \emph{Absolute error (normalized)}: $\displaystyle \frac{|\nabla \Psi_{\hat{\theta}}(x)-\nabla \Psi(x)|}{\|\nabla \Psi(x)\|}$
\item \emph{Cosine similarity}: $\displaystyle \frac{\langle \nabla\Psi_{\hat{\theta}}(x), \hspace{0.1cm}\nabla \Psi(x) \rangle }{\|\nabla\Psi_{\hat{\theta}}(x)\| \|\nabla \Psi(x)\|}$
\end{itemize}


\subsection{Results}
The results are aggregated over $10$ seeds for each gradient-flow SDE, and shown in \cref{fig:boxplots_gmm}. We see that \nappex{} yields the best and most robust estimates for the drift landscapes. On each of the five SDEs, its estimates have the lowest absolute error, the highest cosine similarity, and the lowest variance. While \sbirr{} significantly outperforms \wot, due to iterative drift refinement, its inability to update its diffusion prior prevents it from learning the drift field to the same fidelity as \nappex{} (see also \cref{tab:gradient_magnitude} and \cref{tab:performance_comparison_mean_median} in the Appendix). In particular, \wot{} and \sbirr{} regularly orient flow lines incorrectly in low potential zones, due to misspecified diffusivity (see \cref{fig:oakley-gmms-landscapes} and \cref{tab:gradient_magnitude}).

We also replicate the experiment such that the initial distribution is either uniform, $p_0 \sim \textrm{Unif}[-4,4]^2$, as done in previous work \citep{terpin2024learning}, or given by the stationary Gibbs distribution $p_0 \sim p_{\mathrm{eq}}$. The results are summarized in \cref{fig:boxplots_unif} and \cref{fig:boxplots_gibbs}. For the uniform initialization, \nappex{} continues to perform the best on all SDEs, and all methods perform relatively better, due to higher observability in the evaluation region $[-4,4]^2$. For the Gibbs initialization, all methods fail at inference, which is consistent with identifiability theory (\cref{prop:stationary_non_iden}). Finally, we evaluate \nappex{} against the state-of-the-art variational method \jkonetstar{} \citep{terpin2024learning} given GMM initializations. Results are plotted in \cref{fig:boxplots_jko_appex} and show that \nappex{} achieves more accurate drift and diffusion estimates.

\section{BIOLOGICAL EXPERIMENTS}
We now test \nappex's ability to perform trajectory inference on real biological data. As done in \citet{shen2025multi}, we use single-cell data from human embryonic stem cells (hESC) \citep{chu2016single}.

\subsection{Experimental Setup}

For consistency, we use the same data preprocessing, method implementation, and performance evaluation as \citet{shen2025multi}. Our code builds on the \sbirr{} public repository, such that the only modifications were to randomize the initial diffusivity, and to add the diffusion update step to implement \nappex.

\paragraph{Dataset.} The hESC dataset \citep{chu2016single} comprises $5$ time marginals ($0$h, $12$h, $24$h, $36$h, $72$h), and the observed number of cells per marginal are: $92$, $102$, $66$, $172$, $138$. $5$ PCA components are used.

\paragraph{Method implementation.} For this experiment, the SB solver for each method is implemented with the forward-backward iterative proportional maximum likelihood algorithm \citep{vargas2021solving}. \sbirr{} and \nappex{} both use the same gradient field MLE for re-estimating the reference drift (see 
\citet[Section D.6.2]{shen2025multi} for details), and we add the closed form diffusivity MLE \eqref{eq: diff_MLE_k} for re-estimating the reference diffusion for \nappex. We also implement a time-varying version of \nappex, which computes \eqref{eq: diff_MLE_k} between each pair of marginals to estimate time-varying diffusivity. All iterative methods are run for $10$ iterations. 

\paragraph{Evaluation.} We train each method on the first (0h), third (24h), and fifth (72h) marginals, and then use the learned dynamics to predict the second (12h) and fourth (36h) marginals. We then evaluate performance using the earth mover's distance between the estimated and true holdout marginals. We average results over $10$ random seeds, and for each seed, we sample a random initial diffusivity within an order of magnitude of $\sigma^2 = 0.1$, the default considered in \citet{shen2025multi}.

\subsection{Results} 
We report results in Table \ref{tab:emd_hESC}. We observe that \sbirr’s iterative drift refinement enables better performance compared to the simplest method \wot. Adding the diffusion update then provides another small but noticeable improvement, such that \nappex{} (with time-varying diffusivity) achieves the best overall performance on both holdout marginals. 

This experiment enhances our point that diffusion should be learned from the data, rather than assumed beforehand. Indeed, for real data, the diffusion is unknown, and it is often not obvious how it should be specified. Methods that refine both drift and diffusion also predict unseen marginals with higher accuracy, which suggests that they learn a better model. 

\begin{table}[ht]
\caption{The average earth mover's distance ($\pm$ standard error) across each method and each hold out time on the hESC dataset. The best results are displayed in bold and the second best results are underlined.}
\label{tab:emd_hESC}
\centering
\begin{tabular}{llc}
\toprule
\textbf{Method} & \textbf{Time} & \textbf{Avg. EMD} \\ \midrule
\multirow{2}{*}{\textbf{\wot}} & $t_2$ & 0.803 $\pm$ 0.039 \\
 & $t_4$ & 1.492 $\pm$ 0.081 \\ \midrule
\multirow{2}{*}{\textbf{\sbirr}} & $t_2$ & 0.694 $\pm$ 0.021 \\
 & $t_4$ & 1.473 $\pm$ 0.033 \\ \midrule
\multirow{2}{*}{\textbf{\nappex}} & $t_2$ & \underline{0.683} $\pm$ 0.021 \\
 & $t_4$ & \underline{1.470} $\pm$ 0.040 \\ \midrule
\multirow{2}{*}{\makecell[l]{\textbf{\nappex} \\ {(time-varying)}}} & $t_2$ & \textbf{0.678} $\pm$ 0.022 \\
 & $t_4$ & \textbf{1.454} $\pm$ 0.033 \\ \bottomrule
\end{tabular}
\end{table}

\section{RELATED WORKS} 
Our work contributes novel theory and methodology for identifying the true SDE from observed marginals. We review relevant literature below.

\paragraph{Identifiability theory for population dynamics.} 
To the best of our knowledge, conditions for jointly identifying an SDE's drift and diffusion from marginals have only been proven for linear additive noise SDEs \citep{guan2024identifying}, which cannot capture the multistable dynamics of cell differentiation. 
For the identifiability of gradient-flow SDEs, the partial identifiability result \eqref{eq:previous_identifiability} was first proven by \citet{hashimoto2016learning}. \citet{lavenant2021towards} extended the result for time-inhomogeneous drift $-\nabla \Psi(x,t)$, and showed that a single sample per marginal suffices, if the measurement times are dense. Finally, \citet{neklyudov2023action} proved that the result also holds if the known diffusivity $\sigma(t)^2$ is time-inhomogeneous. 

\paragraph{Schr\"odinger Bridge based inference methods for population dynamics.} 
A popular approach for marginals-based inference is to reconstruct trajectories with entropic optimal transport, which is equivalent to solving a Schr\"odinger Bridge (SB) problem \citep[Prop. 4.2]{peyre2019computational}. The first such method was Waddington-OT (\wot) \citep{schiebinger2019optimal}, which performs trajectory inference by solving a single SB problem with respect to a Brownian motion reference with a prescribed diffusivity $\sigma^2$. \wot{} spawned many variations, which incorporate additional information, like cell lineages \citep{forrow2021lineageot,ventre2023trajectory}, or improve robustness given limited samples \citep{zhang2021optimal,lavenant2021towards,chizat2022trajectory}. In the last year, SB methods were improved with the introduction of iterative reference refinement, i.e. \sbirr{} \citep{shen2025multi, zhang2024joint}. Rather than fixing Brownian motion as a reference, \sbirr{} fixes the diffusivity $\sigma^2$ and iteratively re-estimates the drift of the reference process. In contrast to \sbirr, the method that we introduce in this work, \nappex, additionally re-estimates the diffusivity at each iteration. \nappex{} builds upon the three-stage iterative scheme of \appex{} \citep{guan2024identifying}.


\paragraph{Other inference methods for population dynamics.}
In addition to SB methods, variational energy-based methods \citep{hashimoto2016learning,bunne2022proximal,neklyudov2023action, terpin2024learning, persiianov2025learning} have been developed for SDE inference from population dynamics. These methods leverage the fact that the marginals of potential-driven SDEs minimize a corresponding energy, and use discrete numerical schemes to approximate the underlying parameters. Of these, only \jkonetstar{} \citep{terpin2024learning} jointly estimates drift and diffusion. We note that \jkonetstar{} and \nappex{} consider distinct optimization approaches (the former leverages the JKO scheme \citep{jordan1998variational} with a joint variational step, whereas \nappex{} performs three-stage SB refinement) and we compare their performance in \cref{sec: additional_experiments_appendix}. We also note that \citet{terpin2024learning} did not study identifiability theory, and therefore lacked guarantees for principled inference. Recent work presents another framework for joint inference via maximum mean discrepancy minimization \citep{berlinghieri2025oh}.


\section{CONCLUSIONS}
\label{sec:conclusions}

This work provides a call to action for researchers in ML and single-cell biology to move beyond the current paradigm of assuming known diffusion for marginals-based SDE inference, as we show that this is not only unnecessary, but also problematic in practice. Our theoretical contributions resolve a longstanding identifiability problem by proving that the gradient-flow drift and the diffusivity are jointly identifiable if and only if we observe transient marginals. This is significant for practical applications, since transience is a common and verifiable condition. We further expand the practicality of our result by proving that only three distinct marginals are needed for identifiability. To translate this theory into practice, we introduced the first Schr\"odinger Bridge-based method capable of inferring arbitrary gradient-flow drift and diffusivity. Extensive experiments demonstrate that our method, \nappex, outperforms previous SB methods, in the realistic scenario where diffusivity is unknown.


\paragraph{Limitations and future work.}
While our work contributes novel theory and methodology for SDE inference from marginals, we note several limitations, which point to important directions for future work. First, the gradient-flow SDE model \eqref{eq:overdamped_langevin_SDE} is prevalent in many fields, but it does not capture some important dynamics. In single-cell dynamics, one may expect rotational dynamics from non-conservative drift, due to genetic feedback loops. As another example, in hydrology, material heterogeneity requires modified models for drift and diffusion. Second, our theory was proven given exact observation of the marginals $p(\cdot, t)$. Rigorous asymptotic theory quantifying identifiability with finite samples and observational noise would be useful to the community. Finally, there is still a gap in showing optimality of joint inference methods. While experiments show positive results, proving that \nappex{} converges to the true parameters under basic conditions would mark a major advance.

\paragraph{Acknowledgments.}

We thank NSERC for their support for Vincent Guan and Joseph Janssen. Nicolas Lanzetti was supported by the NCCR Automation, a National
Centre of Competence in Research, funded by the Swiss National Science
Foundation (grant number 51NF40\_225155). Elina Robeva was supported by a Canada CIFAR AI Chair and an NSERC Discovery Grant (DGECR-2020-00338). The authors would also like to thank United Therapeutics for supporting this research.

\bibliographystyle{plainnat}
\bibliography{references}

\section*{Checklist}

\begin{enumerate}

  \item For all models and algorithms presented, check if you include:
  \begin{enumerate}
    \item A clear description of the mathematical setting, assumptions, algorithm, and/or model. [\textbf{Yes}/No/Not Applicable]\\
    \textit{The data-generating model is defined in \eqref{eq:overdamped_langevin_SDE} and precise mathematical details and blanket assumptions are given in \cref{sec:math_background}. Our inference algorithm is defined in \cref{sec:method} and further details are given in \cref{sec: supplement_sb_refinement}}
    \item An analysis of the properties and complexity (time, space, sample size) of any algorithm. [\textbf{Yes}/No/Not Applicable]\\
    \textit{The mathematical properties of our algorithm are discussed in \cref{sec:method}, with related derivations in \cref{sec: supplement_sb_refinement} and \cref{sec: MLE_appendix}. Relevant numerical details are provided in \cref{sec:experiments}, and runtimes are reported in \cref{tab:appex_timing_overall}.}
    \item (Optional) Anonymized source code, with specification of all dependencies, including external libraries. [\textbf{Yes}/No/Not Applicable]
  \end{enumerate}

  \item For any theoretical claim, check if you include:
  \begin{enumerate}
    \item Statements of the full set of assumptions of all theoretical results. [\textbf{Yes}/No/Not Applicable]\\
    \textit{All assumptions, outside the blanket assumptions (two textbook assumptions required for SDE existence and uniqueness), are contained within the statements of the theoretical result.}
    \item Complete proofs of all theoretical results. [\textbf{Yes}/No/Not Applicable]\\
    \textit{Given the importance of our theory and the relative brevity of the proofs, we include full proofs in \cref{sec:iden_theory}, under \cref{thm:identifiability} and \cref{cor:three_points} respectively. Technical details related to our inference algorithm are derived in \cref{sec: supplement_sb_refinement} and \cref{sec: MLE_appendix}.}
    \item Clear explanations of any assumptions. [\textbf{Yes}/No/Not Applicable]\\
    \textit{We discuss assumptions, their relevance for practical inference, and comparison with assumptions from previous literature throughout the paper. For example, see the discussion after the proof of \cref{thm:identifiability}. }
  \end{enumerate}

  \item For all figures and tables that present empirical results, check if you include:
  \begin{enumerate}
    \item The code, data, and instructions needed to reproduce the main experimental results (either in the supplemental material or as a URL). [\textbf{Yes}/No/Not Applicable]\\
    \textit{A zip file of the code repository with scripts for running the experiments, and a detailed README file are included in supplemental material.}
    \item All the training details (e.g., data splits, hyperparameters, how they were chosen). [\textbf{Yes}/No/Not Applicable]
    \textit{We report implementation details in \cref{sec:experiments} and emphasize that we use exact implementations for any shared procedures used by baselines.}
    \item A clear definition of the specific measure or statistics and error bars (e.g., with respect to the random seed after running experiments multiple times). [\textbf{Yes}/No/Not Applicable]\\
    \textit{We specify our metrics in \cref{sec:experiments} and state that  we report results over $10$ random seeds. }
    \item A description of the computing infrastructure used. (e.g., type of GPUs, internal cluster, or cloud provider). [\textbf{Yes}/No/Not Applicable]\\
    \textit{Experiments were run locally on a laptop. Details are given at the start of \cref{sec: additional_experiments_appendix}. }
  \end{enumerate}

  \item If you are using existing assets (e.g., code, data, models) or curating/releasing new assets, check if you include:
  \begin{enumerate}
    \item Citations of the creator If your work uses existing assets. [\textbf{Yes}/No/Not Applicable]\\
    \textit{We declare dependencies on two previous repositories in \cref{sec: additional_experiments_appendix} and cite the original works.}
    \item The license information of the assets, if applicable. [\textbf{Yes}/No/Not Applicable]\\
    \textit{We report the license in \cref{sec: additional_experiments_appendix} and also in our source code.}
    \item New assets either in the supplemental material or as a URL, if applicable. [\textbf{Yes}/No/Not Applicable]\\
    \textit{We provide the full code repository as supplemental material.}
    \item Information about consent from data providers/curators. [Yes/No/\textbf{Not Applicable}]
    \item Discussion of sensible content if applicable, e.g., personally identifiable information or offensive content. [Yes/No/\textbf{Not Applicable}]
  \end{enumerate}

  \item If you used crowdsourcing or conducted research with human subjects, check if you include:
  \begin{enumerate}
    \item The full text of instructions given to participants and screenshots. [Yes/No/\textbf{Not Applicable}]
    \item Descriptions of potential participant risks, with links to Institutional Review Board (IRB) approvals if applicable. [Yes/No/\textbf{Not Applicable}]
    \item The estimated hourly wage paid to participants and the total amount spent on participant compensation. [Yes/No/\textbf{Not Applicable}]
  \end{enumerate}

\end{enumerate}

\clearpage
\onecolumn
\appendix

\section{THEORETICAL BACKGROUND}
\label{sec: basic_langevin_properties}

\subsection{The Distributional Definition of the Fokker-Planck Equation}
\label{subsec:weak_FP}

The Fokker-Planck equation defines the evolution of an SDE's marginals. For an overdamped Langevin SDE \eqref{eq:overdamped_langevin_SDE}, its Fokker-Planck equation is given by
\begin{align}
    \frac{\partial p(x,t)}{\partial t} = \mathcal{L}^*_{\shortminus \nabla \Psi, \sigma^2}(p(\cdot,t))(x),
    \label{eq:FP_lang_appendix}
\end{align}
where the Fokker-Planck operator $\mathcal{L}^*_{\shortminus \nabla \Psi, \sigma^2}$ is defined as
\begin{align*}
(\mathcal{L}^*_{\shortminus \nabla \Psi, \sigma^2}p)(x)= \nabla \cdot(p(x) \nabla \Psi(x)) + \frac{\sigma^2}{2}\Delta p(x).
\end{align*}
However, the equation \eqref{eq:FP_lang_appendix} cannot be interpreted as a strong pointwise equality unless $p$ is a twice differentiable probability density and $\Psi \in C^2(\R^d)$, which is a stronger condition than Lipschitz continuity of $\nabla \Psi$. To show that the evolution of marginals remains well-defined when we consider a general probability measure $\mu$ with finite second moments, we consider the weak distributional formulation of the Fokker-Planck equation \eqref{eq:FP_lang_appendix} through its adjoint operator,
\begin{align*}
(\mathcal{L}_{\shortminus \nabla \Psi, \sigma^2}f)(x)=-\nabla \Psi(x) \cdot \nabla f(x)  + \frac{\sigma^2}{2}\Delta f(x).
\end{align*} 

Consider now a family of Borel locally finite measures on $\R^d \times (0,T)$, which we denote by $(\mu_t)_{t \in (0,T)}$.
To define weak solutions, we follow~\citep[Definition 6.1.1, Proposition 6.1.2(ii)]{bogachev2022fokker} and say that $(\mu_t)_{t \in (0,T)}$ solves the Fokker-Planck equation for the initial condition 
 $\mu|_{t=0}=\nu$ in the weak sense
 if for all test functions $\varphi \in C_0^\infty(\R^d)$, we have that
\begin{align}\label{eq: weak_FP}
    \int_{\R^d}\varphi(x)\mathrm{d}\mu_t(x)
    -
    \int_{\R^d}\varphi(x)\mathrm{d}\nu(x)
    =
    \lim_{\tau\to 0+}\int_{\tau}^t\int_{\R^d}\mathcal{L}_{\shortminus \nabla \Psi, \sigma^2}\varphi(x)\mathrm{d}\mu_s(x)\mathrm{d}s
\end{align}
for almost all $t\in[0,T]$. This can be denoted using the compact notation 
\begin{align}
    \partial_t \mu = \mathcal{L}^*_{\shortminus\nabla \Psi, \sigma^2}\mu.
\end{align}
In this setting, the existence and uniqueness of a weak solution follows from~\citep[Theorem 9.4.8, Example 9.4.7]{bogachev2022fokker}, which applies because $\sigma^2$ is constant and we assumed that $\nabla \Psi$ is Lipschitz and obeys the growth condition, $\|\nabla\Psi(x)\|\le K(1+\|x\|)$. This result is also given in \citet[Theorem 1.1.9]{stroock2008partial}, which additionally shows that the family $(\mu_t)_{t \in (0,T)}$ is continuous under the topology of weak convergence. Thus, the evolution of marginals can be identified with a continuous transition probability function, even under the weak formulation.

\subsection{Additional Proofs}
\label{sec:additional_proofs}

The following results are used to complete the proof of \cref{cor:three_points}. Since the observed marginals in \cref{cor:three_points} are sampled after some time $T_1 > 0$, we note that it suffices to consider times $t \ge T_1 > 0$. Also recall that for this result, we assume that $\Psi \in C^\infty(\R^d)$. This facilitates the proof, by ensuring that any marginal $p(\cdot,t)$ of a candidate gradient-flow SDE exhibits nice regularity and decay. Indeed, by elliptic regularity theory, the Fokker-Planck operator $\mathcal{L}_{\shortminus \nabla \Psi, \sigma^2}$ smooths marginals within any finite time.

\begin{lemma}[Finite-time smoothing of marginals] Suppose that the potential of the gradient-flow SDE \eqref{eq:overdamped_langevin_SDE} is smooth, i.e. $\Psi \in C^\infty(\R^d)$. Then, for any positive time $t>0$, it follows that the marginal $p(\cdot, t)$ admits a twice differentiable density, which obeys the Gaussian decay estimates
\begin{align*}
    p(x,t) &\le Kt^{\frac{-d+2}{2}} \exp\bigl(-\frac{\delta}{2t}\|x\|^2 \bigr)\\
    \|\nabla p(x, t)\|  &\le Kt^{\frac{-d+2}{2}} \exp\bigl(-\frac{\delta}{2t}\|x\|^2 \bigr)
\end{align*}
for some $K, \delta > 0$.
\label{lemma: PDE_regularity_finite_time}
\end{lemma}
\begin{proof}
This result is given by \citet[Theorem 4.1]{pavliotis2014stochastic}. However, we note that in the setup of this theorem, the initial distribution $p(\cdot, 0)$ is assumed to have a density. Thus, we first apply the density estimates from \citet[Theorem 1.2]{menozzi2021density} to show that for any positive time $t/2>0$, the marginal $p(\cdot,{t/2})$ admits a density, even if $p(\cdot,0)$ is only a probability measure $\mu_0$. Indeed, by \citet[Theorem 1.2]{menozzi2021density}, the SDE admits a  transition density function, $p(x,t|y,0)$. Therefore, $p(\cdot, t) = \mu_0 \ast p(\cdot,t \mid \cdot,0)$ admits a density, since it is the convolution of a probability measure with a probability density \citep[Theorem 2.1.16]{durrett2019probability}.
We may thus apply \citet[Theorem 4.1]{pavliotis2014stochastic} after setting the initial density as the density of $p(\cdot, t/2)$. By combining these two PDE regularity theorems, we obtain the desired regularity and decay properties.
\end{proof}

\begin{lemma}[Equivalence of marginals and of flows at accumulation point] As defined in \cref{cor:three_points}, let $\mathcal{I}_i=\{t \in [T_i, T_{i+1}] \mid p(\cdot,t) = q(\cdot,t)\}$, and let $\{t_n\}_{n \ge 0}$ be a sequence of times in $\mathcal{I}_i$, which converges to $t_i^*$. Then, $p(\cdot, t_i^*)=q(\cdot, t_i^*)$ and $\frac{\partial}{\partial_t}p(\cdot, t_i^*) = \frac{\partial}{\partial_t} q(\cdot, t_i^*)$.
\label{lemma:flow_marginals_match}
\end{lemma}
\begin{proof}
First, we define the map $f(t):t \in [T_i, T_{i+1}]\to p(\cdot, t)-q(\cdot,t)$, where we recall that $T_i>0$. Then, by \cref{lemma: PDE_regularity_finite_time}, for any $t \ge T_i$, $p(\cdot, t)$ and $q(\cdot,t)$ each admit a smooth density. Since we also have $\Psi \in C^\infty(\R^d)$, it follows that the Fokker-Planck equation \eqref{eq:FP_eq_langevin} is defined pointwise, which implies the existence of the time derivatives $\frac{\partial p(x,t)}{\partial t}$ and $\frac{\partial q(x,t)}{\partial t}$ \citep[Theorem 4.1]{pavliotis2014stochastic}. Thus, $f(t)=p(\cdot,t)-q(\cdot,t)$ is also a strongly differentiable map in $[T_i,T_{i+1}]$. From this, we deduce that $\mathcal{I}_i$ is closed, since $\mathcal{I}_i=f^{-1}(\{0\})$, which is the preimage of a closed set under a continuous map. 
Since $t_i^*$ is the limit point of a sequence $\{t_n\}_{n \in \N}$ in $\mathcal{I}_i$, it follows that $t_i^*$ is also in the coincidence set $\mathcal{I}_i$, which proves that $p(\cdot, t_i^*) = q(\cdot, t_i^*)$. In fact, since they each admit densities, this is equivalent to the pointwise equality, $p(x, t_i^*) = q(x, t_i^*)$ $\forall x \in \R^d$. Then, to prove that $\frac{\partial}{\partial_t} p(\cdot, t_i^*) = \frac{\partial}{\partial_t} q(\cdot, t_i^*)$, we recall that $p(\cdot,t)$ and $q(\cdot,t)$ are strongly differentiable in time, such that, for all $x \in \R^d$,
\begin{align*}
    \frac{\partial}{\partial t} p(x,t_i^*) &= \lim\limits_{t_n \to t_i^*} \frac{p(x,t_n)-p(x,t_i^*)}{t_n-t_i^*}\\
    \frac{\partial}{\partial t} q(x,t_i^*) &= \lim\limits_{t_n \to t_i^*} \frac{q(x,t_n)-q(x,t_i^*)}{t_n-t_i^*}.
\end{align*}
Then, by convergence of the sequence $t_n \to t_i^*$, the fact that $p(x,t_n)=q(x,t_n)$ (by construction of the sequence) and $p(x,t_i^*)=q(x,t_i^*)$, it follows that $\frac{\partial}{\partial t} p(x,t_i^*) = \frac{\partial}{\partial t} q(x,t_i^*)$ for all $x \in \R^d$.
\end{proof}

\begin{prop}[H-Theorem: marginals of gradient-flow SDEs monotonically decrease free energy] Let $(p(\cdot, t))_{t \ge T_1}$ be the marginals of a Langevin SDE \eqref{eq:overdamped_langevin_SDE} from the earliest possible observation time in the setting of \cref{cor:three_points}, and suppose that $\Psi \in C^\infty(\R^d)$. 
Then, the free energy $J(\rho) = \int_{\R^d} \Psi(x) \rho(x)\dx + \beta\int_{\R^d} \rho(x) \log \rho(x)\dx $, such that $\beta = \frac{\sigma^2}{2}$, strictly decreases in time, unless the process is at its stationary Gibbs distribution, $p_{\mathrm{eq}}$. Thus, $p_{\mathrm{eq}}$ is the only possible marginal that can repeat at distinct times.
\label{prop: decrease_free_energy}
\end{prop} 

\begin{proof}
We apply \cref{lemma: PDE_regularity_finite_time}, which implies that for all $t \ge T_1$, $p(\cdot, t)$ admits a twice differentiable density, with the exponential decay on both the density
$p(x,t) \le Kt^{\frac{-d+2}{2}} \exp\bigl(-\frac{\delta}{2t}\|x\|^2 \bigr)$ and the gradient $\|\nabla p(x,t)\| \le Kt^{\frac{-d+2}{2}} \exp\bigl(-\frac{\delta}{2t}\|x\|^2 \bigr)$, for some $K, \delta > 0$. Thus, we may use chain rule, substitute the Fokker-Planck equation, and integrate by parts (with all boundary terms vanishing) to obtain
\begin{align}
\frac{\text{d}}{\text{d}t} J(p(\cdot,t))
&= \int_{\mathbb{R}^d} \bigl(\Psi(x) + \beta(\log p(x,t) + 1)\bigr)\,\frac{\partial}{\partial t} p(x,t)\, \dx\\
&= \int_{\R^d} \left(\Psi(x) + \beta(\log p(x,t) + 1) \right)\left[\nabla\cdot  (p(x,t) \nabla \Psi(x) ) + \beta\Delta p(x,t) \right]\dx\\
&= -\int_{\R^d} |\nabla \Psi(x)|^2 p(x,t)\dx - 2 \beta \int \nabla p(x,t) \cdot \nabla \Psi(x)\dx  - \beta^2 \int \frac{|\nabla p(x,t)|^2}{p(x,t)}\dx \\
&= - \int_{\R^d}p(x,t)\left\| \nabla \Psi(x) + \beta \nabla \log p(x,t)\right\|^2\dx \le 0,
\end{align}
which has equality only if $\log p(x,t) + \frac{\Psi(x)}{\beta}=K$ for some constant $K$. It follows that the Gibbs distribution
\begin{align*}
p_{\mathrm{eq}}(x) = \frac{1}{Z_{\Psi, \sigma^2}} \exp\left(-\frac{2\Psi(x)}{\sigma^2}\right),
\end{align*}
is the only possible marginal, observed from $t \ge T_1$, which induces no change in free energy $J(p(\cdot,t))$. Moreover, by \cref{lem:finite_free_energy}, we note that $J(p(\cdot, t))<\infty$. Each transient marginal in the evolution $\bigl(p(\cdot, t)\bigr)_{t \ge T_1}$ therefore has a distinct well-defined free energy, which strictly decreases in time, unless we are at $p_{\mathrm{eq}}(x)$. We conclude that the only possible re-occurring marginal in the observation period $t \ge T_1$ is the Gibbs distribution $p_{\mathrm{eq}}$, which is the unique stationary distribution as long as it is integrable \citep{bogachev2022fokker}[Theorem 4.1.11]. 
\end{proof}

\begin{lemma}[Finite free energy]
\label{lem:finite_free_energy}
Let $p(\cdot,t)$ satisfy $p(x,t) \le Kt^{\frac{-d+2}{2}} \exp\bigl(-\frac{\delta}{2t}\|x\|^2 \bigr)$ for some $K, \delta > 0$. Then, $J(p(\cdot,t))=\int_{\R^d}\Psi(x)p(x,t)\dx + \beta \int_{\R^d}p(x,t)\log(p(x,t))\dx <\infty$, with $\beta = \frac{\sigma^2}{2}$.
\begin{proof}
By the fundamental theorem of calculus along the segment $t\mapsto tx$,
\[
\Psi(x)-\Psi(0)=\int_0^1 \nabla\Psi(tx)\cdot x\,\dt.
\]
Then, since we have the linear growth condition, $\|\nabla\Psi(x)\|\le K(1+\|x\|)$ for all $x \in \R^d$, we have
\[
|\Psi(x)|
\le |\Psi(0)| + \|x\|\!\int_0^1\|\nabla\Psi(tx)\|\,\dt
\le |\Psi(0)| + \|x\|\!\int_0^1 K(1+t\|x\|)\,\dt
=|\Psi(0)| + K\|x\| + \tfrac{K}{2}\|x\|^2.
\]
This bound shows that if $p(\cdot,t)$ has finite second moments, then the potential energy term $\int_{\R^d} \Psi(x) p(x,t)\dx$ in the free energy is bounded. Since we always assume that the initial distribution has finite second moments, then $p(\cdot,t)$ also has finite second moments (see \citep[Theorem 1.1.9]{stroock2008partial}). 

We now control the second term in the free energy, by showing that $p(\cdot,t)$ has finite differential entropy. It suffices to apply the Gaussian decay estimate $p(x,t) \le Kt^{\frac{-d+2}{2}} \exp\bigl(-\frac{\delta}{2t}\|x\|^2 \bigr)$. By direct computation, 
\begin{align*}
\int_{\mathbb{R}^d} p(x,t) \log p(x,t)\,\dx
&\le \int_{\mathbb{R}^d} p(x,t) 
\left[ \log K + \tfrac{-d+2}{2}\log t - \tfrac{\delta}{2t} \|x\|^2 \right] \dx\\
&\le \log K \int p(x,t)\,\dx 
+ \tfrac{-d+2}{2}\log t \int p(x,t)\,\dx
- \tfrac{\delta}{2t} \int \|x\|^2 p(x,t)\,\dx \\
&= \log K + \tfrac{-d+2}{2} \log t - \tfrac{\delta}{2t} \mathbb{E}_{p(\cdot,t)}[\|x\|^2] < \infty,
\end{align*}
since \(p(\cdot,t)\) is a probability density with finite second moments. We therefore conclude that the free energy
\begin{align*}
J(p(\cdot,t)) = \int_{\mathbb{R}^d} \Psi(x)\,p(x,t)\,\dx + \beta\int_{\mathbb{R}^d} p(x,t)(x) \log p(x,t)\,\dx<\infty
\end{align*}
\end{proof}
\end{lemma}
We emphasize that \cref{prop: decrease_free_energy} is a classical result, which is well known in the stochastic analysis and statistical mechanics literature, where it is commonly referred to as an H-theorem, due to its connection to Boltzmann's second law of thermodynamics \citep{chafai2015boltzmann}. For example, \citep{jordan1998variational} states that given that $\Psi \in C^\infty(\R^d)$ and $p(\cdot,0)$ is a density with finite free energy, then it is known in the literature (e.g. \citep{risken1989fokker} and attached references) that $J(\rho)$ is a monotone functional on the marginals of a gradient-flow SDE \eqref{eq:overdamped_langevin_SDE}, and that $J(\rho)$ is uniquely minimized at the stationary Gibbs distribution. While we have derived a proof of this result, we believe that the statement holds under more relaxed conditions. In particular, we use the assumption $\Psi \in C^\infty(\R^d)$ to derive the regularity and decay estimates from \cref{lemma: PDE_regularity_finite_time}, which we frequently use for our proof. However, we believe that existing results in parabolic elliptic PDE theory should yield the same estimates under milder assumptions. We note that a proof is given in \citep[Sec. 1.7]{chafai2015boltzmann}, provided that the Gibbs distribution is integrable, and \citep{jordan1998variational} notes that the monotonicity holds, even if the Gibbs distribution is not integrable, but it is unclear what conditions are required.

\section{IDENTIFIABILITY IN THE TIME-INHOMOGENEOUS CASE}
\label{sec:time_inhomo}
Theorem \ref{thm:identifiability} assumes that the drift and diffusion terms are time-homogeneous. However, many physical systems have time-inhomogeneous dynamics. For example, in hydrology, extreme events, such as droughts or fires, can cause ``regime shifts" in the underlying functional relationships \citep{runge2019inferring, gunther2024causal,pedersen1995new,seibert2010effects,maina2020watersheds}. We may define this notion for gradient-flow SDEs, such that the drift and diffusion terms change due to regime shifts at specific times $\{t_i\}_{i=0}^{n-1}$ (with the drift remaining smooth and satisfying the growth condition in each regime to ensure well-posedness).

\begin{defn}[Time-inhomogeneous gradient-flow SDE with discrete regimes] A time-inhomogeneous Langevin SDE with discrete regimes, marked by events taking place at times $\{t_i\}_{i=0}^{n-1}$, is given by
\begin{align}
    \dXt{}=-\nabla\Psi(X_t,t_i)\dt{}+\sigma(t_i)\dWt{},
    \label{eq: regime_langevin_sde}
\end{align}
such that $\nabla\Psi(X_t,t)$ and $\sigma(t)^2$ are time-homogeneous within each regime $t \in [t_i, t_{i+1})$.
\end{defn}

Under these time-inhomogeneous dynamics, identifiability, which is defined analogously to the time-homogeneous case, is still guaranteed by transient marginals.

\begin{cor}[Identifiability for time-inhomogeneous gradient-flow SDEs with discrete regimes]
The time-inhomogeneous gradient-flow SDE with discrete regimes~\eqref{eq: regime_langevin_sde} is identifiable from its marginals $\bigl(p(\cdot, t\bigr)_{t \in [0,T]}$ if and only if $p(\cdot, t_i)$ is not a stationary distribution of $\dXt{}=\nabla\Psi(X_t,t_i)\dt{}+\sigma(t_i)\dWt{}$ for each $t_i$.
\label{cor:regime_shift}
\end{cor}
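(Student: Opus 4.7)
The plan is to reduce \cref{cor:regime_shift} to \cref{thm:identifiability} applied separately on each regime $[t_i, t_{i+1})$, on which the dynamics~\eqref{eq: regime_langevin_sde} are time-homogeneous with parameters $(-\nabla\Psi(\cdot, t_i), \sigma(t_i)^2)$ and initial distribution $p(\cdot, t_i)$. Provided $T$ exceeds every switching time, the observation window intersects each regime in a positive-length sub-interval, so \cref{thm:identifiability} is directly applicable regime-by-regime.

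For the direction ``some $p(\cdot, t_i)$ stationary $\Rightarrow$ non-identifiable'' (the contrapositive of sufficiency), I would fix an offending regime index $i^\star$ and apply \cref{prop:stationary_non_iden} to replace $(-\nabla\Psi(\cdot, t_{i^\star}), \sigma(t_{i^\star})^2)$ by $(-\alpha\nabla\Psi(\cdot, t_{i^\star}), \alpha\sigma(t_{i^\star})^2)$ for an arbitrary $\alpha \neq 1$. Since $p(\cdot, t_{i^\star})$ is stationary under both parameterizations, every marginal on $[t_{i^\star}, t_{i^\star+1})$ remains equal to $p(\cdot, t_{i^\star})$, and in particular $p(\cdot, t_{i^\star+1})$ is unchanged. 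Leaving the parameters in every other regime untouched preserves the marginals on all of $[0,T]$ while producing a genuinely distinct parameter sequence, witnessing non-identifiability.

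For the converse, I would take two candidate parameter sequences $\{(-\nabla\Psi^{(k)}(\cdot, t_i), \sigma^{(k)}(t_i)^2)\}_{i=0}^{n-1}$ for $k\in\{1,2\}$ whose Fokker-Planck operators agree on the observed marginals, and let $i^\star$ be the smallest index at which they differ. The two sequences coincide on $[0, t_{i^\star}]$, so they produce identical marginals there, and by assumption they produce identical marginals on $[t_{i^\star}, T]$ as well. Hence on $[t_{i^\star}, \min(t_{i^\star+1}, T))$ both time-homogeneous Langevin SDEs share the initial distribution $p(\cdot, t_{i^\star})$ and generate the same marginals. \cref{thm:identifiability} then forces $p(\cdot, t_{i^\star})$ to be a stationary distribution for the regime-$i^\star$ dynamics, as required.

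The argument is essentially bookkeeping on top of \cref{thm:identifiability} and \cref{prop:stationary_non_iden}, so no new analytic difficulty arises. The only subtlety is checking that ``non-identifiability'' in the time-inhomogeneous sense translates cleanly to non-identifiability within each regime; this follows because the Fokker-Planck equation is local in time and the drift and diffusion are piecewise constant across regimes, so an equality of Fokker-Planck operators on $\{p(\cdot, t)\}_{t\in[0,T]}$ restricts to the same equality on each sub-interval.
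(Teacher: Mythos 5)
Your proof is correct and takes essentially the same route as the paper, which simply applies \cref{thm:identifiability} (together with \cref{prop:stationary_non_iden}) regime-by-regime over the partition $\{[t_i,t_{i+1})\}$ of $[0,T]$. Your write-up just makes explicit the bookkeeping (minimal differing regime, propagation of unchanged marginals to later regimes) that the paper leaves implicit.
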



\begin{proof}
We apply \cref{thm:identifiability} on each regime $[t_i, t_{i+1})$. Since the regimes form a partition of $[0,T]$, this guarantees that there is a unique SDE with the observed marginals $\bigl(p(\cdot, t\bigr)_{t \in [0,T]}$.
\end{proof}

Intuitively, the drift and diffusion within a given regime $[t_i, t_{i+1})$ are identifiable if and only if the regime's initial marginal, $p(\cdot, t_i)$, is not a stationary distribution. However, we note that if the drift or diffusion terms change continuously in time, then it no longer follows that transience completely characterizes identifiability. Indeed, the process could be in an equilibrium state that itself is changing in time, 
as the next example shows. In particular, distinct time-inhomogeneous SDEs may share the same marginals $\bigl(p(\cdot,t)\bigr)_{t \ge 0}$, as long as for each $t\ge0$, the marginal $p(\cdot,t)$ is precisely the stationary distribution of the time-inhomogeneous residual process, evaluated at that time.





\begin{ex}[Continuously evolving equilibrium due to time-inhomogeneous potential]
\label{ex: non_identifiability_time_inhomo} The SDEs 
\begin{align}
    \mathrm{d}X_t &= \mathrm{d}W_t, \\
    \mathrm{d}Y_t &=
    \begin{cases}
         \mathrm{d}W_t, & t = 0, \\
        -\dfrac{Y_t}{t} \, \mathrm{d}t + \sqrt{3} \, \mathrm{d}W_t, & t > 0.
    \end{cases}
\end{align}
produce the same marginals $p(\cdot, t)=\mathcal{N}(0,t)$ when initialized with $X_0=Y_0=0$, i.e., $p_0=\delta(x)$.  
\end{ex}
\begin{proof}
Let $t>0$. The respective Fokker-Planck operators for both SDEs are
\begin{align*}
   \mathcal{L}^*_{0, 1}\bigl(p(\cdot, t)\bigr)(x) &= \frac{1}{2}\frac{\partial^2}{\partial x^2} p(x,t) \\
   \mathcal{L}^*_{-\frac{x}{t}, 3}\bigl(p(\cdot, t)\bigr)(x)&= \frac{\partial}{\partial x}\left(p(x,t) \frac{x}{t}\right) + \frac{3}{2}\frac{\partial^2}{\partial x^2}p(x,t)
\end{align*}
Then, suppose that $p(\cdot, t)=\mathcal{N}(0,t)$, such that $p(x,t) = \frac{1}{\sqrt{2\pi t}} \exp(-\frac{x^2}{2t})$. Then,
\begin{align*}
    \frac{\partial}{\partial x}p(x,t) &= -\frac{x}{t}p(x,t) \\
    \frac{\partial^2}{\partial x^2}p(x,t) &= -\frac{p(x,t)}{t} + \frac{x^2}{t^2}p(x,t)
\end{align*}
It follows that
\begin{align*}
    \mathcal{L}^*_{0, 1}\bigl(p(\cdot, t)\bigr)(x)&=  -\frac{p(x,t)}{2t} + \frac{x^2}{2t^2}p(x,t)\\
    &= \frac{p(x,t)}{2t^2}\bigl(x^2-t \bigr)\\
   \mathcal{L}^*_{-\frac{x}{t}, 3}\bigl(p(\cdot, t)\bigr)(x)&= 
   -\frac{x^2}{t^2}p(x,t) + \frac{p(x,t)}{t} + \frac{3x^2}{2t^2}p(x,t) - \frac{3p(x,t)}{2t}\\
   &= \frac{p(x,t)}{2t^2}\bigl(-2x^2+2t+3x^2 -3t \bigr)\\
   &= \frac{p(x,t)}{2t^2}\bigl(x^2-t\bigr).
\end{align*}
Thus, the Fokker-Planck operators are equivalent for $p(\cdot, t) = \mathcal{N}(0,t)$, which are the marginals of Brownian motion. Equivalently, we observe that $\mathcal{N}(0,t)$ is the stationary distribution of the residual Fokker-Planck equation at time $t$:
\begin{align*}
   0 &= \mathcal{L}^*_{-\frac{x}{t}, 2}\bigl(p_\mathrm{eq}\bigr)(x)= \frac{\partial}{\partial x}\left(p_\mathrm{eq}(x) \frac{x}{t}\right) +\frac{\partial^2}{\partial x^2}p_\mathrm{eq}(x).
\end{align*}
Given $t>0$, $p_\mathrm{eq}(x) = \frac{1}{\sqrt{2\pi t}} \exp(-\frac{x^2}{2t})$ solves the above equation. This yields the interpretation that the marginals are at equilibrium (for the residual process  $\mathrm{d}Z_t = -\frac{Z_t}{t}\dt{} + \sqrt{2}\dWt{}$), which itself is time-varying.
\end{proof}

\section{ADDITIONAL NON-IDENTIFIABILITY EXAMPLES}
\label{sec: non_iden_examples}

For gradient-flow SDEs, we have proven that non-identifiability from marginals arises if and only if the observed marginals also correspond to the marginals of a residual process in equilibrium. An example for the time-homogeneous case was given in \cref{ex:non_iden_stationary}, such that the equilibrium marginals are constant, and an example for the time-inhomogenous case was given in \cref{ex: non_identifiability_time_inhomo}, such that the equilibrium distribution of the residual process continuously changes in time. 

However, for other forms of SDEs, other types of non-identifiability have been documented in the literature, including: undetectable rotations \citep{weinreb2018fundamental, shen2025multi, guan2024identifying}, rank-degenerate trajectories \citep{wang2024generator}, and sharing the same stationary distribution \citep{lavenant2021towards}. We overview examples below:
\begin{ex}[Gaussian pancake~\protect{\citep{guan2024identifying}}]
Let $a,b,c \in \R$, with $a \leq b$, and let $X_0$ be defined as a $2d$ Gaussian pancake, such that for each fixed $x_0^{(2)} \in [a,b]$, the first coordinate $x_0^{(1)} \sim \mathcal{N}(0, 1)$:
\label{ex: non_identifiability_gaussian_pancakes}
\begin{align}
    \mathrm{d}X_t &= \begin{bmatrix} -1 & 0 \\ 0 & b  \end{bmatrix}\mathrm{d}t + \begin{bmatrix} 1 & 0 \\ 0 & c  \end{bmatrix} \mathrm{d}W_t, &X _0 \sim \mathcal{N}(0, 1) \times [a, b] \\
    \mathrm{d}Y_t &= \begin{bmatrix} -10 & 0 \\ 0 & b  \end{bmatrix}\mathrm{d}t + \begin{bmatrix} \sqrt{10} & 0 \\ 0 & c  \end{bmatrix} \mathrm{d}W_t. &Y _0 \sim \mathcal{N}(0, 1) \times [a, b]
\end{align}
\end{ex}

\begin{ex}[Rotation \protect{\citep{shen2025multi,hashimoto2016learning,weinreb2018fundamental, guan2024identifying}}]
\label{eq: non_identifiability_rotation}
\begin{align*}
    \mathrm{d}X_t &= \mathrm{d}W_t, &X_0 \sim \mathcal{N}(0, I)\\
    \mathrm{d}Y_t &= \begin{bmatrix} 0 & 1 \\ -1 & 0  \end{bmatrix}Y_t \: \mathrm{d}t
    + \mathrm{d}W_t. &  Y_0 \sim \mathcal{N}(0, I)
\end{align*}
\end{ex}
\begin{ex}[Degenerate rank \protect{\citep{wang2024generator, guan2024identifying}}]
\label{eq: non_identifiability_wang}
\begin{align*}
    \mathrm{d}X_t &= \begin{bmatrix} 1 & 2 \\ 1 & 0  \end{bmatrix} X_t \: \mathrm{d}t
    +  \begin{bmatrix} 1 & 2 \\ -1 & -2  \end{bmatrix}\mathrm{d}W_t, &X_0 = \begin{bmatrix}
    1 \\ -1
    \end{bmatrix}\\
    \mathrm{d}Y_t &=  \begin{bmatrix} 1/3 & 4/3 \\ 2/3 & -1/3  \end{bmatrix} Y_t \: \mathrm{d}t
    + \begin{bmatrix} 1 & 2 \\ -1 & -2  \end{bmatrix}\mathrm{d}W_t. &Y_0 = \begin{bmatrix}
    1 \\ -1
    \end{bmatrix}
\end{align*}
\end{ex}

\section{ADDITIONAL NOTES ON NN-APPEX}
\label{sec: supplement_sb_refinement}

\subsection{Convergence}
To discuss convergence of \nappex's tri-level iterative scheme \eqref{eq: SB_traj_inf_k}-\eqref{eq: diff_MLE_k}, we first rigorously justify why we may re-estimate diffusion while ensuring that the objective remains finite. In particular, we must show that we have finite $\mathrm{KL}$ divergence between the reconstructed paths from the step \eqref{eq: SB_traj_inf_k} and the paths following MLE parameter estimation of a gradient-flow SDE from the steps \eqref{eq: drift_MLE_k} and \eqref{eq: diff_MLE_k}. 

Given continuously observed marginals of a $d$-dimensional process, recall that the $\mathrm{KL}$ divergence between two laws on paths $P$ and $Q$, taken over the path space $\Omega = C([0,T], \R^d)$, is given by
\begin{align}
    \mathrm{KL}(Q\|P) = \int_{\Omega} \log\left(\frac{dQ}{dP}(\omega) \right)dQ(\omega).
    \label{eq: KL_cts}
\end{align}
By Girsanov's theorem, \eqref{eq: KL_cts} is only finite for $Q$ and $P$, if their underlying SDEs share the same diffusion \citep{vargas2021solving}. Besides non-identifiability, this is another cited reason for why previous Schr\"odinger Bridge methods assume that diffusivity must be fixed \citep{shen2025multi}. However, in the practical setting, we only observe a finite number of marginals, so we should instead optimize $\mathrm{KL}$ divergence over the discretized path space $\Omega_N = C(\{t_i\}_{i=0}^{N-1}, \R^d)$. Then, for each law on paths, we only observe the couplings, $Q_{t_{i}, t_{i+1}}$ and $P_{t_{i}, t_{i+1}}$, between consecutive times. Let $Q^N$ and $P^N$ denote the concatenations of these couplings from $t_0$ to $t_{N-1}$. Then, by \citet{benamou2019entropy}[Lemma 3.4], evaluating the $\mathrm{KL}$ divergence over $\Omega_N = C(\{t_i\}_{i=0}^{N-1}, \R^d)$ yields 
\begin{align}
    \mathrm{KL}(P^N\|Q^N) = \sum_{i=0}^{N-2}\mathrm{KL}(P_{t_{i}, t_{i+1}}\| Q_{t_{i}, t_{i+1}}) - \sum_{i=1}^{N-2}\mathrm{KL}(P_{t_{i}}\| Q_{t_{i}}).
    \label{eq: KL_discrete}
\end{align}
First note that by the data processing inequality, $\mathrm{KL}(P_{t_{i}}\| Q_{t_{i}}) \le \mathrm{KL}(P_{t_{i}, t_{i+1}}\| Q_{t_{i}, t_{i+1}})$. Hence, the expression will be finite as long as $\mathrm{KL}(P_{t_{i}, t_{i+1}}\| Q_{t_{i}, t_{i+1}}) = \int_{\R^d \times \R^d} \log\left(\frac{dP_{t_{i}, t_{i+1}}}{dQ_{t_{i}, t_{i+1}}}\right)P_{t_{i}, t_{i+1}}(x)dx<\infty$. Since the transition density of any nondegenerate diffusion process is absolutely continuous with respect to the Lebesgue measure for any positive time, it follows that the expression is finite for all gradient-flow SDEs \eqref{eq:overdamped_langevin_SDE}. 

As described in \cref{sec:method}, \nappex{} will alternate between finding a law on paths $P$ that minimizes KL in the first argument (trajectory inference) and finding a law on paths $Q$ that minimizes KL in the second argument (MLE parameter estimation). Note that each optimization is subject to distinct hard constraints, which respectively stipulate that $P \in \Pi(p(\cdot, t_i)_{i=0}^{N-1})$ adheres to the produced marginals and that $Q$ is the law of a gradient-flow SDE. The algorithm therefore performs alternating projections, onto distinct spaces. In fact, by \cref{cor:three_points}, only one gradient-flow SDE $Q_{\shortminus\nabla\Psi, \sigma^2}$ obeys the marginal constraints, which implies that the intersection of the two projection spaces is a singleton.  In other words,
$$\inf\limits_{\substack{
    P \in \Pi\bigl(p(\cdot, t_i)_{i=0}^{N-1}\bigr) 
}}
\mathrm{KL}(P \| Q_{\shortminus\nabla\Psi, \sigma^2}) = 0 \iff P = Q_{\shortminus\nabla\Psi, \sigma^2}.
$$
By construction, the iterates will produce a monotonically decreasing sequence of $\mathrm{KL}$ divergences between reconstructed laws and MLE estimated laws. With the above argument, we have that the $\mathrm{KL}$ divergences are bounded above by some $M>0$, and from below by $0$ (attained by the true gradient-flow SDE parameters $(-\nabla \Psi, \sigma^2)$). Thus, the sequence of $\mathrm{KL}$ divergences must converge. However, as noted in \citet{shen2025multi}, this does not imply that the arguments necessarily converge, since multiple pairs $(P,Q)$ may produce the same divergence, such that the iterative algorithm gets stuck in a cycle. While this has not been observed empirically, and is impossible if the divergence is $0$ (since there is a unique gradient-flow SDE satisfying the marginal constraints), it is an open question whether the iterative scheme will converge to the unique optimal parameters. Indeed, while convergence of the iterative SB refinement algorithm has been proven in the case where the family of admissible probability transition densities is convex \citep[Proposition 1]{shen2025multi}, this condition does not generally hold, and is false for the family of gradient-flow SDEs.

\subsection{Runtime}
The runtime of \nappex{} is split across its three subprocedures: 
\begin{enumerate}
    \item \textbf{Trajectory inference \eqref{eq: SB_traj_inf_k}}: solving the multi-marginal SB problem with respect to the current reference SDE to infer and sample from a law on paths.
    \item \textbf{MLE drift estimation \eqref{eq: drift_MLE_k}}: training a neural network whose parameters minimize the objective \eqref{eq:likelihood_fn_nn} over the inferred paths.
    \item \textbf{MLE diffusion estimation \eqref{eq: diff_MLE_k}}: computing the quadratic variation \eqref{eq:quad_var_diff_MLE} over the inferred paths, conditioned on the drift estimate.
\end{enumerate}
We note that \nappex's runtime will vary for different implementations of these subprocedures. In particular, there are multiple approaches for solving the multi-marginal SB problem. One approach is to consider all contiguous pairs of marginals, and then to apply iterative proportional fitting (Sinkhorn's algorithm) on each pair to determine the distribution over couplings \citep{shen2025multi, guan2024identifying}. To enforce consistency across transitions, we instead apply a multi-marginal iterative proportional fitting across all marginals, see \citep{marino2020optimal}[(4.9)]. This algorithm is analogous to the two-marginals setting, but it instead rescales per-time slices for each marginal, rather than just the endpoints. We observed better accuracy for this approach, given the same stopping criteria and maximum number of iterations, and were thus able to reduce runtime with relatively fewer iterations. We similarly note that  MLE drift estimation can be implemented using different neural network architectures and hyperparameters. In all experiments we use $128$ neurons per layer, but this can be changed with some alterations to both the flexibility and the runtime (Table \ref{tab:appex_timing_nnsize}).

\begin{table}[h]
    \centering
    \caption{Average runtime of drift estimation step \eqref{eq: drift_MLE_k} using different neural network widths.}
    \label{tab:appex_timing_nnsize}
    \begin{tabular}{cc}
        \toprule
        \textbf{NN Width} & \textbf{Avg Time (s)} \\
        \midrule
        16  & 1.32  \\
        \midrule
        32  & 1.85  \\
        \midrule
        64  & 3.20  \\
        \midrule
        128 & 6.57  \\
        \midrule
        256 & 13.20 \\
        \bottomrule
    \end{tabular}
\end{table}

In contrast to the drift estimation, the diffusion MLE estimate is a closed formula, and is thus cheap to compute. We report the average runtime (in seconds) of each of the three subprocedures for a single iteration, run on our main experiments in Table \ref{tab:appex_timing_overall}.

\begin{table}[htbp!]
\centering
\small
\caption{Average runtime per iteration (seconds) for each subprocedure, aggregated across all potentials.}
\begin{tabular}{lcccc}
\toprule
& \textbf{Trajectory inference} & \textbf{Drift MLE} & \textbf{Diffusion MLE} & \textbf{Total} \\
\midrule
Average & 3.842 $\pm$ 0.046 & 6.176 $\pm$ 0.061 & 0.011 $\pm$ 0.002 & 10.030 $\pm$ 0.065 \\
\bottomrule
\end{tabular}
\label{tab:appex_timing_overall}
\end{table}

Since we used $30$ iterations of \nappex{}, each SDE inference for our main experiments was approximately $5$ minutes. Since the diffusion MLE is computationally negligible, \nappex{} has virtually the same runtime as an analogous \sbirr{} algorithm. In contrast, \wot{} only comprises a single iteration, and thus has the fastest runtime.

Stopping criteria, such as thresholds on the drift and diffusion estimates of the iterates, or approximations of the $\mathrm{KL}$ objective based on the $W^1$,$W^2$ metrics or maximum mean discrepancy (MMD),  can be enforced for better performance and faster runtime. However, we note that these evaluations tend to be computationally intensive themselves, since they either require evaluations on a discretized grid or an approximation of relative entropy.

\section{MAXIMUM LIKELIHOOD ESTIMATION FOR DRIFT AND DIFFUSION}
\label{sec: MLE_appendix}

Let the parameters of a gradient-flow SDE \eqref{eq:overdamped_langevin_SDE} be given by drift $-\nabla \Psi_\theta(x)$ and diffusivity $\sigma^2$. Then, if we apply the first-order Euler-Maruyama linear approximation, we have that $X_{t+\Delta t} | X_t \sim \mathcal{N}(X_t - \nabla \Psi_\theta(X_t)\Delta t, \sigma^2 \Delta t)$. Hence, for a single observation of a trajectory $x_t \to x_{t+\Delta t}$,
\begin{align*}
    p(x_{t+\Delta t}|x_t) = \frac{1}{(2\pi \sigma^2\Delta t)^{d/2}}\exp\left(-\frac{\|x_{t+\Delta t} - x_t + \nabla \Psi_\theta(x_t)\Delta t\|^2}{2\sigma^2\Delta t}\right).
\end{align*}
The full negative log-likelihood function is therefore given by
\begin{align}
    l(\theta, \sigma^2 | x_t, x_{t+\Delta t}) = -\log\left(p(x_{t+\Delta t}|x_t)\right) = \frac{d}{2}\log(2\pi\Delta t) + \frac{d}{2}\log(\sigma^2) + \frac{\|x_{t+\Delta t} - x_t + \nabla \Psi_\theta(x_t)\Delta t\|^2}{2\sigma^2\Delta t}.
\end{align}
The MLE parameters $(\hat{\theta}, \hat{\sigma}^2)$ minimize $l(\theta, \sigma^2 | x_t, x_{t+\Delta t})$. However, note that $\theta$ only appears in the numerator of the last term. It follows that $\hat{\theta}$ minimizes the squared error of the finite difference, which we denote
\begin{align}
    \ell(\theta) = \|x_{t+\Delta t} - x_t + \nabla \Psi_\theta(x_t)\Delta t\|^2.
\end{align}
If we observe $M$ independent trajectories over $N-1$ time steps ($N$ observed times), let $X= \bigl\{x_{i\Delta t}^{(m)}, x_{(i+1)\Delta t}^{(m)}: m \in \{1, \cdots, M \},i \in \{0, \cdots, N-2\}\bigr\}$. We similarly observe that the negative log-likelihood function is given by 
\begin{align*}
     l\left(\theta, \sigma^2 | X \right) = -\log\left(\prod_{m=1}^{M}\prod_{i=0}^{N-2} p(x_{(i+1)\Delta t}^{(m)}|x_{i\Delta t}^{(m)})\right) = -\sum_{m=1}^M\sum_{i=0}^{N-2}\log(p(x_{(i+1)\Delta t}^{(m)}|x_{i\Delta t}^{(m)})),
\end{align*}
and it follows that $\hat{\theta}$ would minimize the mean squared error
\begin{align}
    \ell(\theta)=\sum_{m=1}^{M}\sum_{i=0}^{N-2}\Big\|x_{(i+1)\Delta t}^{(m)}-x_{i\Delta t}^{(m)}+\nabla \Psi_\theta(x_{i\Delta t}^{(m)})\Delta t\Big\|_2^2.
\end{align}
To derive the diffusion MLE estimator, $\hat{\sigma}^2$, we first fix $\theta = \hat{\theta}$, since the drift parameter can be independently optimized. Then, we solve $\frac{\partial}{\partial \sigma^2}l\left(\hat{\theta}, \sigma^2 | X\right) =0$, and obtain
\begin{align*}
    0 &= \sum_{m=1}^{M}\sum_{i=0}^{N-2}\frac{d}{2\sigma^2} - \frac{1}{2\sigma^4 \Delta t}\sum_{m=1}^{M}\sum_{i=0}^{N-2}\Big\|x_{(i+1)\Delta t}^{(m)}-x_{i\Delta t}^{(m)}+\nabla \Psi_{\hat{\theta}}(x_{i\Delta t}^{(m)})\Delta t\Big\|_2^2 \\
    0 &= dM(N-1)- \frac{1}{ \sigma^2\Delta t} \ell(\hat{\theta})\\
    \sigma^2 &= \frac{1}{d\,M\,(N-1)\,\Delta t}\ell(\hat{\theta})
\end{align*}

\section{POTENTIALS}
\label{sec:potentials}

Our experiments consider the following potentials for simulating gradient-flow SDEs. In particular, these are the potentials from \citet{terpin2024learning}, which admit a valid stationary Gibbs distribution $p_{\mathrm{eq}}=\frac{1}{Z}\exp\left(-\frac{\Psi}{2\sigma^2} \right)$.
\begin{align}
\text{\textbf{Bohachevsky}} \qquad
\Psi(x) &= 10\!\left(x_1^2 + 2x_2^2 - 0.3\cos(3\pi x_1) - 0.4\cos(4\pi x_2)\right)
\label{eq:pot-bohachevsky} \\[1em]
\text{\textbf{Oakley--O'Hagan}} \qquad
\Psi(x) &= 5 \sum_{i=1}^2 \left(\sin(x_i) + \cos(x_i) + x_i^2 + x_i\right)
\label{eq:pot-oakley-ohagan} \\[1em]
\text{\textbf{Quadratic}} \qquad
\Psi(x) &= 5\,\|x\|^2
\label{eq:pot-quadratic} \\[1em]
\text{\textbf{Styblinski--Tang}} \qquad
\Psi(x) &= \tfrac{1}{2}\sum_{i=1}^2 \left(x_i^4 - 16x_i^2 + 5x_i\right)
\label{eq:pot-styblinski-tang} \\[1em]
\text{\textbf{Wavy plateau}} \qquad
\Psi(x) &= \sum_{i=1}^2 \left( \cos(\pi x_i) + \tfrac{1}{2}x_i^4 - 3x_i^2 + 1 \right)
\label{eq:pot-wavy-plateau}
\end{align}
Each of these potentials is smoothly differentiable, and we note that they each lead to well-defined strong solutions to the SDE \eqref{eq:overdamped_langevin_SDE}. Indeed, in order to satisfy the linear growth condition, $\|\nabla \Psi(x)\| \le K(1+\|x\|)$, we can multiply potentials by a smooth cutoff function, which is $1$ in $\|x\| \le M$ and $0$ in $\|x \| > 2M$. Choosing $M=100$ for our experiments does not alter data generation compared to the raw setting. Similarly, since the cutoff function ensures that $\Psi$ is supported in a compact subset of $\R^d$, it follows that smoothness of $\Psi$ implies that the drift $-\nabla \Psi$ is Lipschitz.

\section{ADDITIONAL EXPERIMENTS}
\label{sec: additional_experiments_appendix}

\paragraph{Software and hardware details.} The code in this paper was adapted from two public code repositories (\texttt{APPEX}: \url{https://github.com/guanton/APPEX} and \jkonetstar{}: \url{https://github.com/antonioterpin/jkonet-star}). All computations are performed on a 2024 MacBook Pro with $16$GB RAM and an Apple M4 chip. The code was adapted to design the experiments, visualize the data, and interpret the results. It is available in the supplementary material.

\subsection{Experimental Setup}
Since the objective of our main experiments was to evaluate the inferential power of different Schr\"odinger Bridge methods, we considered the methods \wot, \sbirr, and \nappex{} (ours), and we simulated data from a range of population dynamics for each SDE, by randomizing the initial distribution over Gaussian mixtures. We perform two additional experiments, such that we use the same experimental setup, but consider two particular initial distributions: the uniform distribution over the region of interest $[-4,4]^2$ and the stationary Gibbs distribution $p_{\mathrm{eq}}$. In order to sample from the Gibbs distributions $\frac{1}{Z}\exp(-\frac{2\Psi(x)}{\sigma^2})$, we generate trajectories from $p_0= \textrm{Unif}([-4,4]^2)$ and run $200$ steps of the SDE with step size $\Delta t=0.01$. Indeed, each of the potentials \eqref{eq:pot-bohachevsky}-\eqref{eq:pot-wavy-plateau} satisfies the Poincaré inequality, which ensures that marginals converge exponentially quickly to the stationary Gibbs distribution \citep[Theorem 4.4]{pavliotis2014stochastic}. 

We also perform additional experiments comparing our method, \nappex, against the state-of-the-art variational method, \jkonetstar{} \citep{terpin2024learning}. We implement \jkonetstar{} with default hyperparameters, and use $100$ training epochs, noting rapid convergence of the objective. We verified in a separate experiment that running with $10,000$ epochs did not meaningfully change performance. The experimental setting is the main setting over random GMM initializations, in order to test on different population dynamics. Since \jkonetstar{} also jointly estimates drift and diffusion for the gradient-flow SDE \eqref{eq:overdamped_langevin_SDE}, we additionally report diffusivity MAE for this experiment.

\subsection{Results}
The uniform initial distribution $p_0\sim \mathrm{Unif}[-4,4]^2$ represents an idealized benchmark \citep{terpin2024learning} while the stationary distribution $p_0 \sim p_{\mathrm{eq}}$ represents the non-identifiable setting. Results comparing the three SB methods, \wot, \sbirr, and \nappex, are summarized for the uniform initialization in \cref{fig:boxplots_unif} and for the non-identifiable stationary initialization in \cref{fig:boxplots_gibbs}. 

We also present results from our main experimental setting (random GMM initializations), which compare our method \nappex{} against \jkonetstar{} in \cref{fig:boxplots_jko_appex}. The results show that \nappex{} accurately infers both the drift and diffusivity to high precision for all SDEs, and outperforms \jkonetstar except for the diffusivity estimate of the Bohachevsky potential. We note that the Bohachevsky potential is particularly difficult to estimate, as it has the highest magnitude, and converges to its Gibbs distribution $p_{\mathrm{eq}}$ at a much faster rate than the other SDEs, all while exhibiting highly periodic and nonlinear dependencies. For this reason, the MLE diffusion estimate by \nappex{} likely achieves biased estimates, since the adjusted quadratic variation \eqref{eq:quad_var_diff_MLE} is impacted by poor drift estimation. This is also consistent with previous results \citep{terpin2024learning}, which showed that the Bohachevsky potential is particularly difficult to estimate in dimension $d=2$.

Tables \ref{tab:gradient_magnitude} and \ref{tab:performance_comparison_mean_median} provide further details about our simulated experimental results from the main text. Table \ref{tab:gradient_magnitude} shows that the performance gap between \sbirr{} and \nappex{} is highest in low potential areas, i.e. regions where the magnitude of the gradient field $\nabla \Psi$ is small. Further, most potentials from this benchmark have high magnitudes in a large portion of the evaluation grid, while \sbirr{} particularly struggles in low potential areas (see \cref{fig:misspecified_diffusion} and \cref{fig:oakley-gmms-landscapes}). Intuitively, if the potential is strong enough, then it can still be approximated even if diffusion is misspecified. To emphasize that \nappex{} is especially useful for inference in low potential zones, \cref{tab:gradient_magnitude} stratifies the results based on the magnitude of the true gradient at each evaluation point. All results show that \nappex{} outperforms \sbirr, especially in low potential zones, as one would expect from Figures \ref{fig:misspecified_diffusion} and \ref{fig:oakley-gmms-landscapes}. This is likely due to diffusion having an outsized noisy impact on the data in these regions. Moreover, Table \ref{tab:performance_comparison_mean_median} shows that the median (used in the main text) is actually a favourable evaluation metric for \sbirr{} compared to the mean (not shown in main text). Indeed, across all potentials, the gap in drift estimation performance between \sbirr{} and \nappex{} is larger for the mean compared to the median. This occurs since \sbirr{} performs fairly well most of the time, but can still frequently catastrophically fail due to poor initial diffusion estimates.



\begin{figure*}[!h]
    \centering

    \begin{subfigure}{\textwidth}
        \centering
        \textbf{(a) Absolute drift error (normalized by true magnitude)}\\[0.3em]
        \includegraphics[width=\textwidth]{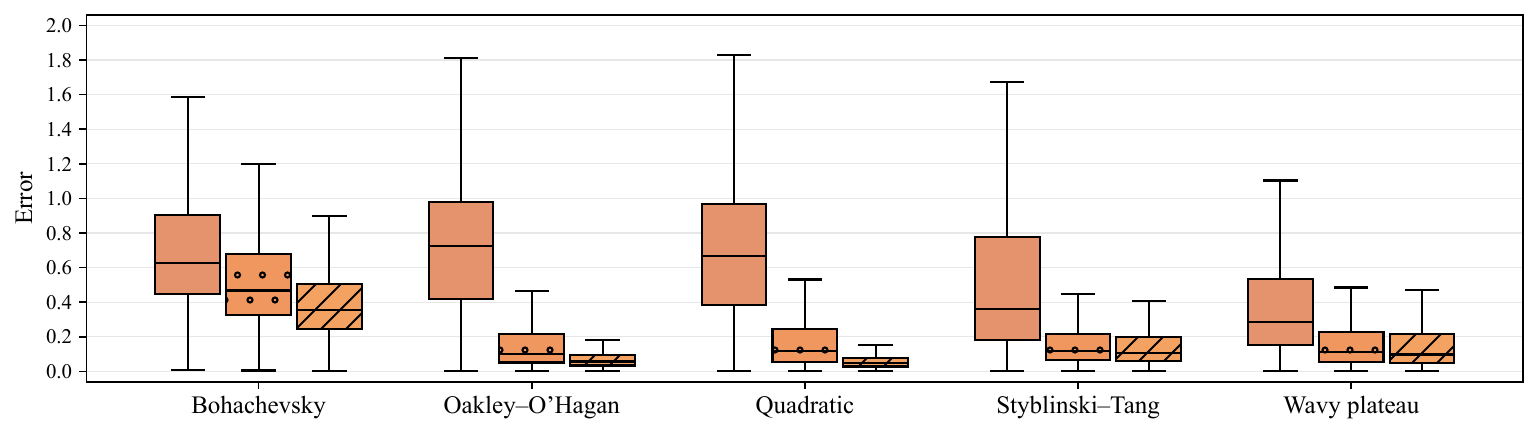}
        \label{fig:box_mae_unif}
    \end{subfigure}

    \vspace{1em}

    \begin{subfigure}{\textwidth}
        \centering
        \textbf{(b) Cosine similarity}\\[0.3em]
        \includegraphics[width=\textwidth]{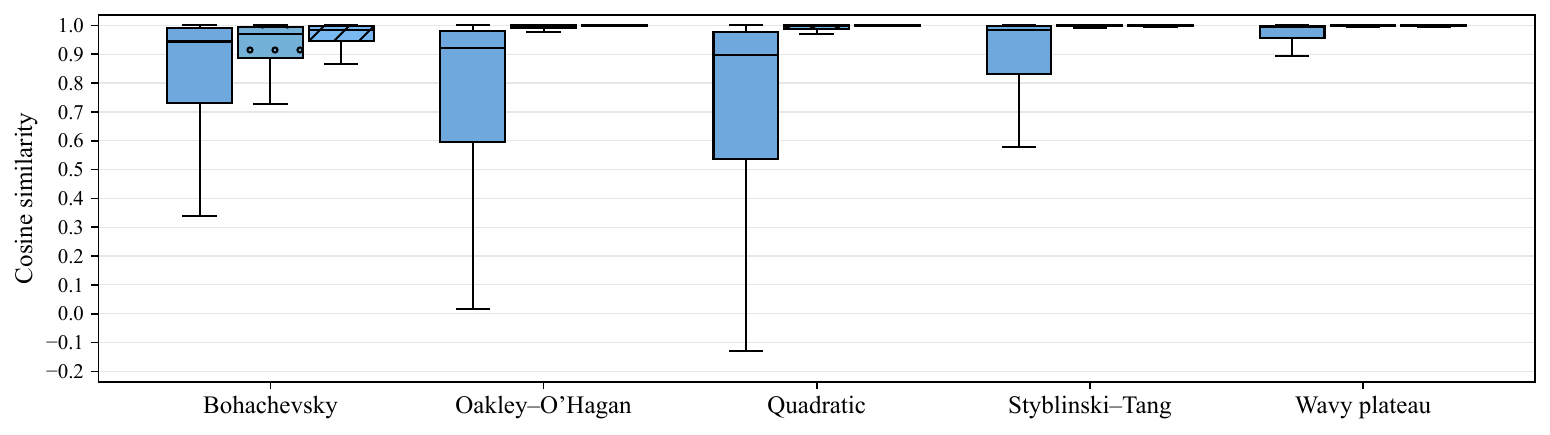}
        \label{fig:box_cos_unif}
    \end{subfigure}

    \vspace{1em}

    \begin{tikzpicture}
      \def\w{1}   
      \def\h{0.4} 
      \def\dx{4.0} 
      \def\labgap{0.2} 
      \def\hatchdist{4pt}
      \def\hatchwidth{0.3pt}
      \def\hatchangle{45}

      \begin{scope}[shift={(0,0)}]
        \draw[black] (0,0) rectangle (\w,\h);
        \node[anchor=west] at (\w+\labgap,\h/2) {\wot};
      \end{scope}

      \begin{scope}[shift={(\dx,0)}]
        \draw[black] (0,0) rectangle (\w,\h);
        \draw[black] (0.15,0.5*\h) circle (0.02);
        \draw[black] (0.50,0.5*\h) circle (0.02);
        \draw[black] (0.85,0.5*\h) circle (0.02);
        \node[anchor=west] at (\w+\labgap,\h/2) {\sbirr};
      \end{scope}

      \begin{scope}[shift={(2*\dx,0)}]
        \path[
          pattern = {Lines[angle=\hatchangle, distance=\hatchdist, line width=\hatchwidth]},
          pattern color = black
        ] (0,0) rectangle (\w,\h);
        \draw[black] (0,0) rectangle (\w,\h);
        \node[anchor=west] at (\w+\labgap,\h/2) {\nappex{} (ours)};
      \end{scope}
    \end{tikzpicture}

    \caption{The ability of different Schr\"odinger Bridge methods to infer the gradient-flow drift is evaluated across five potentials using (a) normalized absolute error and (b) cosine similarity. Here, methods observe three marginals with a uniform initial distribution in the region of interest, i.e. $p_0 \sim \mathrm{Unif}[-4,4]^2$. Box-and-whisker plots over $10$ seeds show \nappex{} performs best across all potentials.}
    \label{fig:boxplots_unif}
\end{figure*}

\begin{table}[!h]
    \centering
    \caption{Mean difference in normalized absolute drift error (and cosine similarity) between \nappex{} and \sbirr. Results are displayed for each potential and aggregated with respect to low potential zones (gradient magnitude is less than median) and high potential zones (gradient magnitude is higher than median). Higher numbers indicate a greater outperformance of \nappex{} over \sbirr.}
    \label{tab:gradient_magnitude}
    \begin{tabular}{lcc}
        \toprule
        & \multicolumn{2}{c}{\textbf{Gradient Magnitude}} \\
        \cmidrule(lr){2-3}
        \textbf{Potential} & \textbf{Low} & \textbf{High} \\
        \midrule
        Bohachevsky      & 0.293 (0.074) & 0.184 (0.018) \\
        Oakley-O'Hagan  & 0.612 (0.100) & 0.253 (0.042) \\
        Quadratric        & 1.397 (0.098) & 0.654 (0.054) \\
        Styblinski-Tang & 0.128 (0.031) & 0.075 (0.005) \\
        Wavy Plateau    & 2.199 (0.035) & 0.185 (0.012) \\
        \bottomrule
    \end{tabular}
\end{table}

\begin{figure*}[t]
    \centering

    \begin{subfigure}{\textwidth}
        \centering
        \textbf{(a) Absolute drift error (normalized by true magnitude)}\\[0.3em]
        \includegraphics[width=\textwidth]{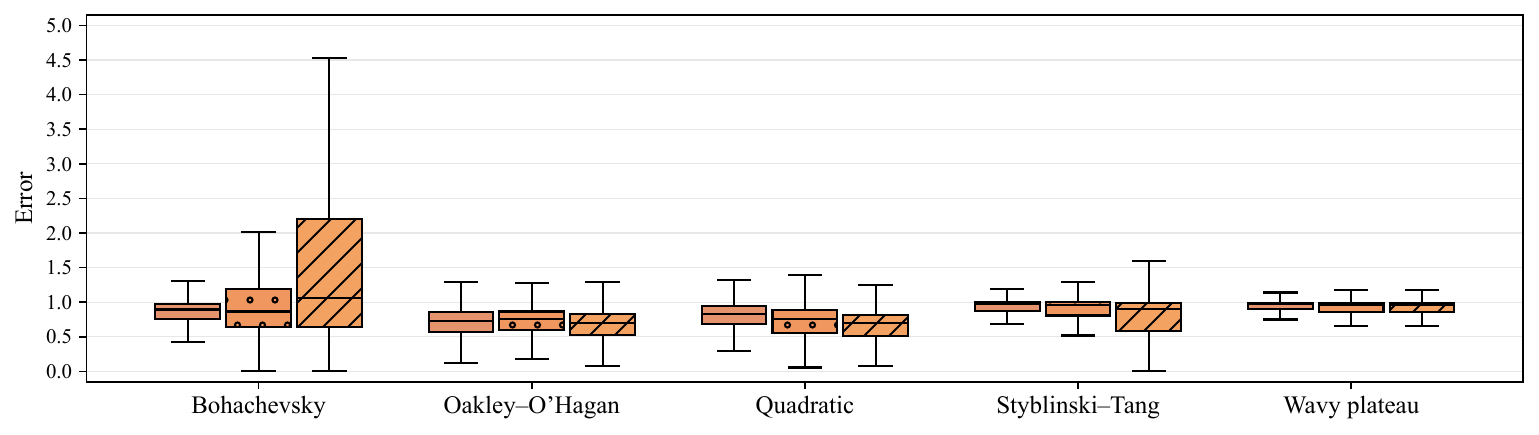}
        \label{fig:box_mae_gibbs}
    \end{subfigure}

    \vspace{1em}

    \begin{subfigure}{\textwidth}
        \centering
        \textbf{(b) Cosine similarity}\\[0.3em]
        \includegraphics[width=\textwidth]{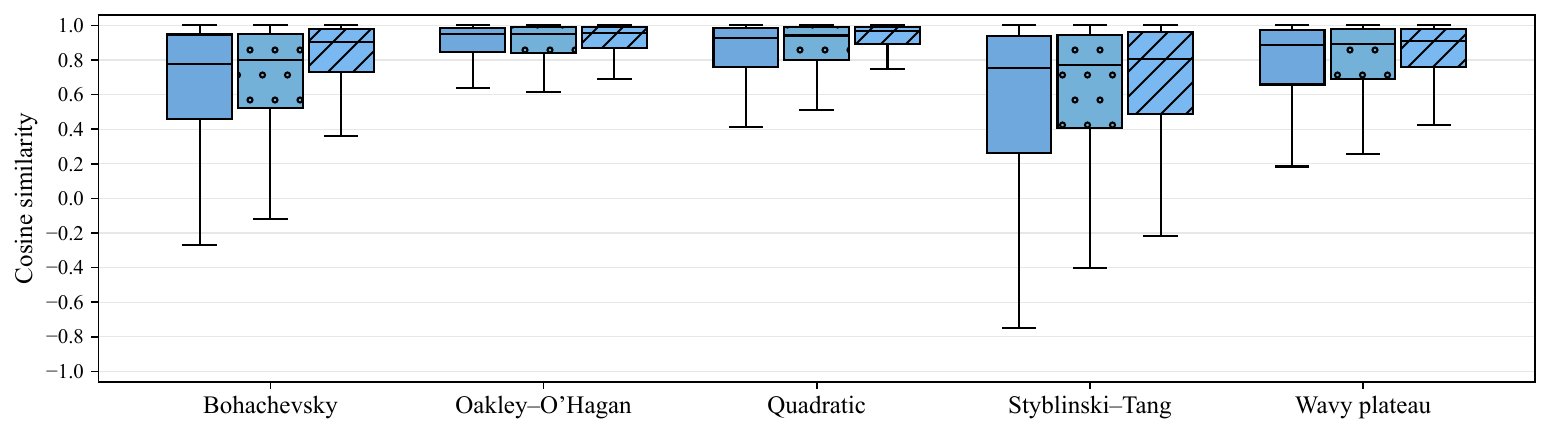}
        \label{fig:box_cos_gibbs}
    \end{subfigure}

    \vspace{1em}

    \begin{tikzpicture}
      \def\w{1}   
      \def\h{0.4} 
      \def\dx{4.0} 
      \def\labgap{0.2} 
      \def\hatchdist{4pt}
      \def\hatchwidth{0.3pt}
      \def\hatchangle{45}

      \begin{scope}[shift={(0,0)}]
        \draw[black] (0,0) rectangle (\w,\h);
        \node[anchor=west] at (\w+\labgap,\h/2) {\wot};
      \end{scope}

      \begin{scope}[shift={(\dx,0)}]
        \draw[black] (0,0) rectangle (\w,\h);
        \draw[black] (0.15,0.5*\h) circle (0.02);
        \draw[black] (0.50,0.5*\h) circle (0.02);
        \draw[black] (0.85,0.5*\h) circle (0.02);
        \node[anchor=west] at (\w+\labgap,\h/2) {\sbirr};
      \end{scope}

      \begin{scope}[shift={(2*\dx,0)}]
        \path[
          pattern = {Lines[angle=\hatchangle, distance=\hatchdist, line width=\hatchwidth]},
          pattern color = black
        ] (0,0) rectangle (\w,\h);
        \draw[black] (0,0) rectangle (\w,\h);
        \node[anchor=west] at (\w+\labgap,\h/2) {\nappex{} (ours)};
      \end{scope}
    \end{tikzpicture}

    \caption{The ability of different Schr\"odinger Bridge methods to infer the gradient-flow drift is evaluated across five potentials using (a) normalized absolute error (lower is better) and (b) cosine similarity (higher is better). Methods observe three marginals with the initial distribution equal to the SDE's stationary Gibbs distribution, $p_0 \sim p_{\mathrm{eq}}$. Aggregated box-and-whisker plots over $10$ seeds show that all methods perform similarly poorly, corroborating \cref{prop:stationary_non_iden}: the true gradient-flow drift is not identifiable without knowing the diffusivity.}
    \label{fig:boxplots_gibbs}
\end{figure*}

\begin{figure*}[t]
    \centering

    \begin{subfigure}{\textwidth}
        \centering
        \textbf{(a) Absolute drift error (normalized by true magnitude)}\\[0.3em]
        \includegraphics[width=\textwidth]{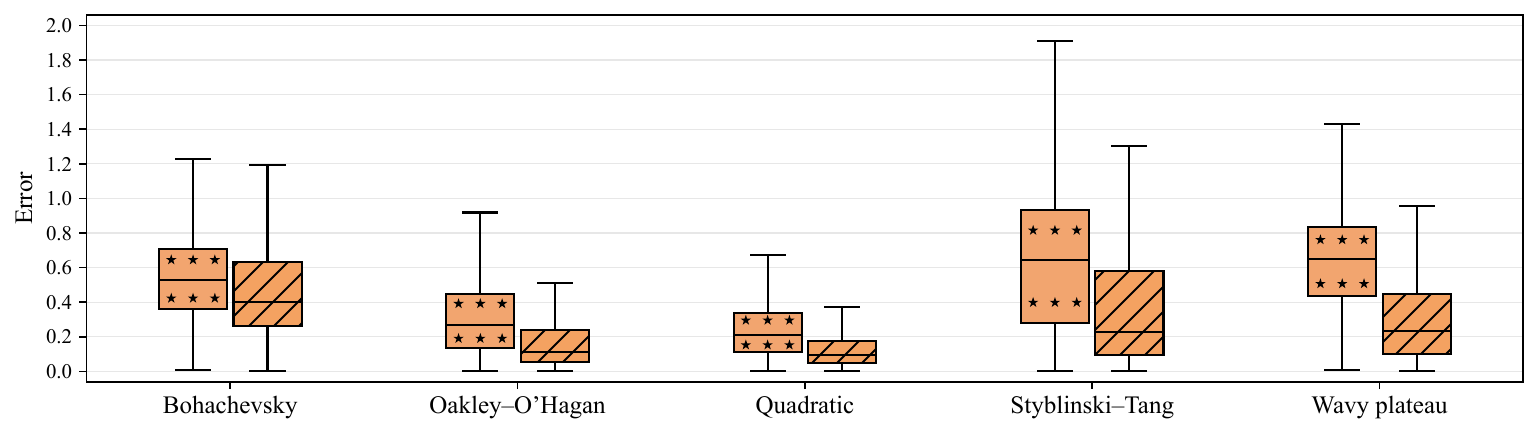}
        \label{fig:box_mae_appex_jkonetstar}
    \end{subfigure}

    \vspace{1em}

    \begin{subfigure}{\textwidth}
        \centering
        \textbf{(b) Cosine similarity}\\[0.3em]
        \includegraphics[width=\textwidth]{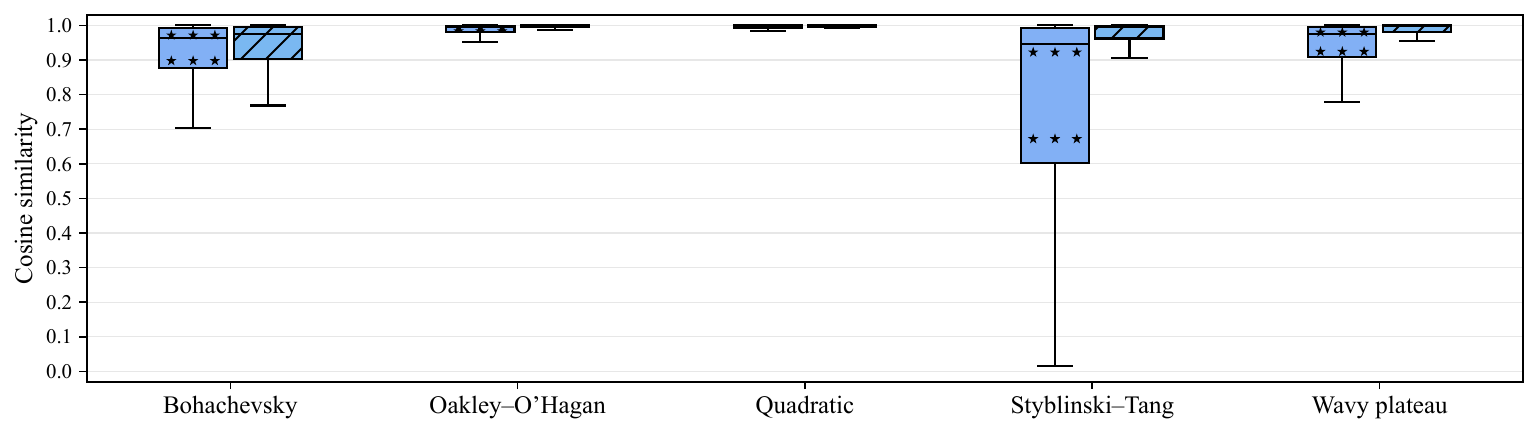}
        \label{fig:box_cos_appex_vs_jkonet}
    \end{subfigure}

    \vspace{1em}

    \begin{subfigure}{\textwidth}
        \centering
        \textbf{(c) Absolute diffusivity error}\\[0.3em]
        \includegraphics[width=\textwidth]{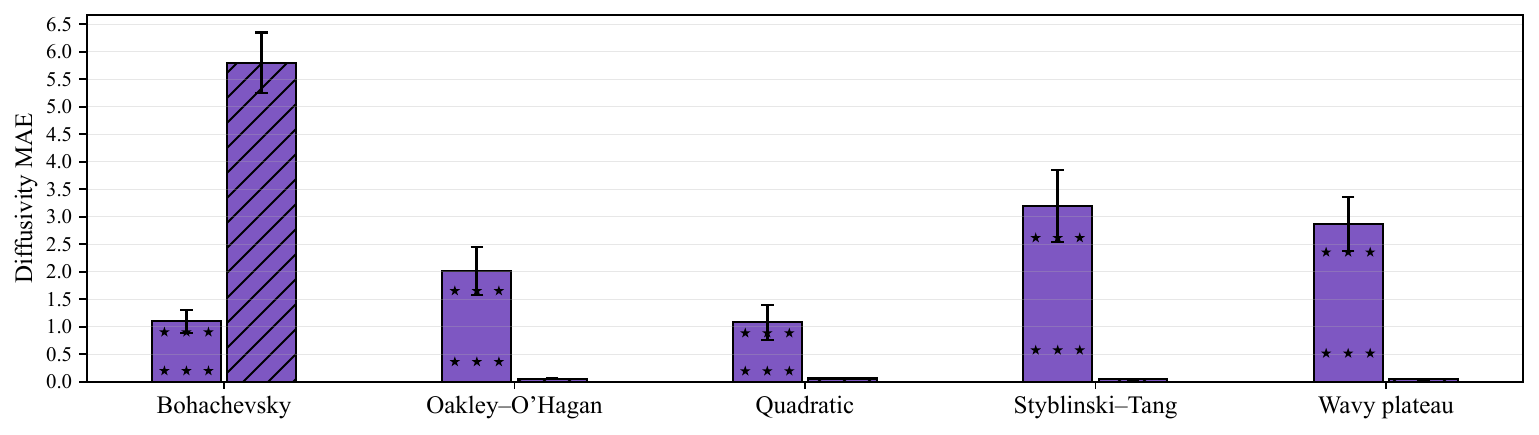}
        \label{fig:diff_mae_appex_jko}
    \end{subfigure}

    \vspace{1em}


\begin{tikzpicture}
  \def\w{1}        
  \def\h{0.4}      
  \def\dx{4.0}     
  \def\labgap{0.2} 
  \def\starsize{3pt}

  \def\hatchdist{4pt}
  \def\hatchwidth{0.3pt}
  \def\hatchangle{45}

  \begin{scope}[shift={(0,0)}]
    \draw[black] (0,0) rectangle (\w,\h);
    \foreach \x in {0.15,0.50,0.85}{
      \node[
        star, star points=5, star point ratio=2.25,
        draw=black, fill=black, minimum size=\starsize, inner sep=0pt
      ] at (\x,0.5*\h) {};
    }
    \node[anchor=west] at (\w+\labgap,\h/2) {\jkonetstar};
  \end{scope}

  \begin{scope}[shift={(\dx,0)}]
    \path[
      pattern = {Lines[angle=\hatchangle, distance=\hatchdist, line width=\hatchwidth]},
      pattern color = black
    ] (0,0) rectangle (\w,\h);
    \draw[black] (0,0) rectangle (\w,\h);
    \node[anchor=west] at (\w+\labgap,\h/2) {\nappex};
  \end{scope}
\end{tikzpicture}

    \caption{The ability of the variational method \jkonetstar{} and \nappex{} (ours) to infer the gradient-flow drift is evaluated across five potentials using (a) normalized absolute error (lower is better) and (b) cosine similarity (higher is better), and their ability to infer the diffusivity is evaluated using (c) absolute error (lower is better). Both methods are given samples from three distinct marginals, such that the initial distribution is a Gaussian mixture model with randomly initialized components. The plots aggregated from $10$ seeds show that our method, \nappex{}, achieves better estimation of the drift and diffusion in this setting, except for the Bohachevsky potential.}
    \label{fig:boxplots_jko_appex}
\end{figure*}

\begin{table}[h]
    \centering
    \caption{Performance comparison of \sbirr{} and \nappex{} methods across various benchmark potential functions, reporting mean and median error metrics.}
    \label{tab:performance_comparison_mean_median}
    \begin{tabular}{llll}
        \toprule
        \textbf{Potential} & \textbf{Method} & \textbf{Mean} & \textbf{Median} \\
        \midrule
        Bohachevsky & \sbirr{} & 0.9135 & 0.5804 \\
        \cmidrule{2-4}
         & \nappex{} & 0.6749 & 0.4561 \\
        \midrule
        Oakley-O'Hagan & \sbirr{} & 0.5783 & 0.1786 \\
        \cmidrule{2-4}
         & \nappex{} & 0.1463 & 0.0839 \\
        \midrule
        Quadratic & \sbirr{} & 1.1515 & 0.1911 \\
        \cmidrule{2-4}
         & \nappex{} & 0.1261 & 0.0739 \\
        \midrule
        Styblinski-Tang & \sbirr{} & 0.5141 & 0.2245 \\
        \cmidrule{2-4}
         & \nappex{} & 0.4126 & 0.1509 \\
        \midrule
        Wavy Plateau & \sbirr{} & 2.1761 & 0.2107 \\
        \cmidrule{2-4}
         & \nappex{} & 0.9842 & 0.1960 \\
        \bottomrule
    \end{tabular}
\end{table}


\subsection{Comparing \appex{} and \nappex{}}

As a final experiment, we compare \appex{} and \nappex{} on a linear SDE problem in order to ensure \nappex{} is fully expressive and can compete with \appex{} in the linear domain for which \appex{} was designed. Results are displayed in Table \ref{tab:APPEX_vs_nnAPPEX}. As expected, \appex{} does quite a bit better than \nappex{} in terms of drift estimation since we have a closed form MLE estimate. \appex{}'s normalized MAE score is significantly better, while their cosine similarities are comparable, pointing to the fact that \nappex{} may do poorly in low drift regions. Interestingly, \nappex{} does slightly better in terms of diffusion estimation ($0.055$ vs $0.074$).

\begin{table}[h!]
\caption{Inference results for \nappex{} and \appex{} given the quadratic potential setting (see Appendix \ref{sec:potentials}). Both methods are given three marginals with Gaussian mixture model initialization.}
\label{tab:APPEX_vs_nnAPPEX}
    \centering
    \begin{tabular}{lccc}
        \toprule
        \textbf{Method} & \textbf{Normalized MAE} & \textbf{Cosine Similarity} & \textbf{Diffusivity error} \\
        \midrule
        \nappex{}       & 0.1550          & 0.9925          & \textbf{0.055} \\
        \appex{} (linear) & \textbf{0.0135} & \textbf{0.9996} & 0.074 \\
        \bottomrule
    \end{tabular}
\end{table}

\end{document}